\documentclass[11 pt,leqno]{amsart}

\usepackage{amsmath}
\usepackage{amssymb}
\usepackage{amsfonts}
\usepackage{amsthm}
\usepackage{hyperref}
\usepackage{tikz}
\usepackage{enumerate}

\usepackage{bbm}
\usepackage[mathscr]{eucal}

\def\bbone{{\mathbbm 1}}

\newcommand{\Eann}{ E_{\mathrm{ann}}}

\newcommand{\sA}{\mathscr A}

\newcommand{\Be}{\begin{equation}}
\newcommand{\Ee}{\end{equation}}
\newcommand{\Bea}{\begin{align}}
\newcommand{\Eea}{\end{align}}
\newcommand{\Beas}{\begin{align*}}
\newcommand{\Eeas}{\end{endalign*}}
\newcommand{\Benu}{\begin{enumerate}}
\newcommand{\Eenu}{\end{enumerate}}
\newcommand{\Bi}{\begin{itemize}}
\newcommand{\Ei}{\end{itemize}}

\newcommand{\ci}[1]{_{{}_{\!\scriptstyle{#1}}}}

\def\bbone{{\mathbbm 1}}

\def\intslash{\rlap{\kern  .32em $\mspace {.5mu}\backslash$ }\int}
\def\qsl{{\rlap{\kern  .32em $\mspace {.5mu}\backslash$ }\int_{Q_x}}}
\def\Re{\operatorname{Re\,}}

\def\vth{\vartheta}

\def\N{\mathbb N}

\def\emph#1{{\it #1 }}

\def\inn#1#2{\langle#1,#2\rangle}

\def\meas{{\text{\rm meas}}}
\def\ud{{\text{\,\rm d}}}

\def\lc{\lesssim}

\def\eps{\varepsilon}

\def\ka{\kappa}
            
\def\la{\lambda}             \def\La{\Lambda}

\def\om{\omega}

\def\fA{{\mathfrak {A}}}

\def\bbN{{\mathbb {N}}}

\def\bbR{{\mathbb {R}}}

\def\bbZ{{\mathbb {Z}}}

\def\cC{{\mathcal {C}}}

\def\cF{{\mathcal {F}}}

\def\cS{{\mathcal {S}}}

\newcommand{\lri}[1]{}

\theoremstyle{plain}
\newtheorem{thm}{Theorem}[section]
\newtheorem{lemma}[thm]{Lemma}

\theoremstyle{remark}
\newtheorem{remark}[thm]{Remark}

\numberwithin{equation}{section}

\begin{document}

\title[A lower bound]
{A lower bound 
for a variation norm operator  associated with  circular means}

\author[D. Beltran, A. Carbery, L. Roncal, A. Seeger]{David Beltran \ \  Anthony Carbery \ \ Luz Roncal \ \  Andreas Seeger }

\address{David Beltran, Departament d’An\`alisi Matem\`atica, Universitat de Val\`encia, Av. Vicent Andrés Estellés 19, 46100 Burjassot, Spain}
\email{david.beltran@uv.es}

\address{Anthony Carbery, School of Mathematics and Maxwell Institute for Mathematical
Sciences, University of Edinburgh, James Clerk Maxwell Building, Peter Guthrie Tait
Rd, Kings Buildings, Edinburgh EH9 3FD, Scotland}

\email{A.Carbery@ed.ac.uk}

\address{Luz Roncal, BCAM – Basque Center for Applied Mathematics, 48009 Bilbao, Spain, and
Ikerbasque, Basque Foundation for Science,
48011 Bilbao, Spain
and
Universidad del Pa\'is Vasco / Euskal Herriko Unibertsitatea,
Apartado 644, 48080 Bilbao, Spain}
\email{lroncal@bcamath.org}

\address{Andreas Seeger, Department of Mathematics, University of Wisconsin--Madison, Madison, WI 53706, USA}
\email{seeger@math.wisc.edu}

 \dedicatory{Dedicated to Professor Hans Triebel on the occasion of his 90th birthday}

 \subjclass[2020]{42B15, 42B20, 42B25, 42B35}

 \keywords{Circular averages, variation norm bounds, square functions, ball multipliers, Besicovitch sets}

\begin{abstract}
We  prove that a local $L^p(V_2)$ variation norm estimate fails for circular means in two dimensions, and quantify this failure by proving lower bounds for functions of exponential type. This is related to lower bounds for Fourier multipliers supported on annuli, of the type considered by C\'ordoba. 
\end{abstract}

\date{\today}
\maketitle

\section{Introduction}
Consider the circular averages 
\[A_t f(x) = \frac1{2\pi} \int_0^{2\pi}  f(x_1-t\cos\alpha, x_2-t\sin\alpha) \, \ud\alpha \] 
for  functions in $L^p(\bbR^2)$.

Let $I=[1,2]$ and let $V^I_r A$ be the operator obtained from taking the $r$-variation semi-norm with respect to the $t$ variable of the map $t\mapsto A_t f$ over the interval $I$, that is
$$
V_r^I A f(x)
:= \sup_{N \in \N} \,\, \sup_{\substack{t_1 < \cdots < t_N \\ t_j \in I}} \Big(\sum_{j=1}^{N-1} |A_{t_{j+1}}f(x) - A_{t_j}f(x) |^r \Big)^{1/r}.
$$
For the family of spherical means the  variation operator $V^I_r A$ and its global analogue have been  studied in \cite{JSW} and then more recently in \cite{BeltranOberlinRoncalSeegerStovall}.  
The $L^p$-boundedness for $V^I_r A $ for some $r$  implies the $L^p$-boundedness of the local circular maximal operator \cite{Bourgain1986} (with dilations restricted to $I$); therefore boundedness  fails for $p\le 2$.
It is known \cite{JSW}  that $V_r^IA$ maps $L^p$ into itself for $2<p\le 4$,  $r> 2$ 
and for $p>4$, $r>p/2$; moreover $L^p$-boundedness fails for $r<\max\{ 2, p/2\}$. 
A key question, namely the $L^p$-boundedness for the local variation operator $V_2^I A$ in the range $2<p\le 4$,  remained open. We remark  that the usual restriction $r>2$ in variation-norm bounds is related to L\'epingle's theorem \cite{Lepingle1976} which applies to the {\it global} variation norm operator {(that is, when $I$ is replaced by $(0,\infty)$)}. However, this does not suggest any necessary condition for the local variant  $V_r^I A$.

It turns out that $V^I_2A$ is not bounded on any $L^p$ space.
\begin{thm} \label{thm:main} For all $p\ge 1$, 
\[\sup_{\substack{f\in \cS(\bbR^2)\\ \|f\|_{p} \le 1 } }\big\|V_2^I A f\|_{L^p(\bbR^2)} =\infty. \]
\end{thm}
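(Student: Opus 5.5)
The plan is to reduce the $V_2$ bound to a square function inequality for annular Fourier multipliers, and then defeat that inequality with a Besicovitch/Kakeya-type construction in the spirit of Córdoba's lower bounds. For $p\le 2$ nothing needs to be done: as recalled in the introduction, $V^I_2 A f(x)\ge |A_2f(x)-A_1 f(x)|$ and $|A_t f(x)|\le |A_1 f(x)|+V^I_2Af(x)$, so a bound for $V_2^I A$ on $L^p$ would give a bound for the local circular maximal operator, which fails for $p\le 2$. The real content is the range $p>2$.

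\textbf{Step 1: reduction to a square function.} For every partition $1\le t_1<\dots<t_N\le 2$ we have the pointwise bound $V^I_2Af(x)\ge \big(\sum_j|A_{t_{j+1}}f(x)-A_{t_j}f(x)|^2\big)^{1/2}$. I will take $t_j=1+j/N$ with $N\approx \delta^{-1}$, and test on $f$ whose Fourier transform is supported in the annulus $\{|\xi|\in[\lambda,2\lambda]\}$ with $\lambda\approx \delta^{-1}$. The increment $A_{t_{j+1}}-A_{t_j}$ has multiplier $\widehat{\sigma}(t_{j+1}\xi)-\widehat{\sigma}(t_j\xi)$. Using the Bessel asymptotics $\widehat\sigma(\xi)\sim |\xi|^{-1/2}\sum_\pm c_\pm e^{\pm i|\xi|}$, this increment acts, up to the factor $\lambda^{-1/2}$, like multiplication of $\widehat f$ by $e^{\pm i t_j|\xi|}(e^{\pm i|\xi|/N}-1)$. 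This symbol has size $\approx 1$ when $|\xi|/N\approx 1$. Replacing $\widehat f$ by $\lambda^{1/2}\widehat f$, the square function is therefore essentially $\big(\sum_j |e^{it_j\sqrt{-\Delta}}g|^2\big)^{1/2}$ for a suitable $g$ with frequencies in the annulus. Alternatively, choosing $f$ with frequency support in thin annuli, the quantity becomes comparable to a Córdoba-type square function $\big(\sum_k|P_k f|^2\big)^{1/2}$, where the $P_k$ are smooth projections onto annuli of width $1$ at radius $\approx\lambda$.

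\textbf{Step 2: the counterexample.} I will use a Kakeya set built from $\approx\delta^{-1}$ rectangles $R_\nu$ of dimensions $\delta\times 1$, with directions $\delta$-separated, whose union has measure $\lesssim 1/\log(1/\delta)$, while the translated rectangles $\tilde R_\nu$ (the reaches) are pairwise disjoint. Put $f=\sum_\nu e^{i\lambda\langle x,e_\nu\rangle}\phi_\nu(x)$, where $\phi_\nu$ is adapted to $R_\nu$. Then $A_t f$, and hence the increments, transport each wave packet by $t$ along its direction $e_\nu$, much like the classical example for the ball multiplier. I expect the square sum over $j$ to be at least $c\,\delta^{-1/2}$ times the packet size on $\tilde R_\nu$. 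Indeed, each packet contributes for about $\delta^{-1}$ indices $j$, each of size comparable to the packet amplitude once the $\delta^{1/2}$ normalization is accounted for. Because the $\tilde R_\nu$ are disjoint, this gives a lower bound for $\|V_2^I Af\|_p^p$ that is independent of $\delta$. On the other hand, $\|f\|_p^p$ is bounded above by the measure of the union of the $R_\nu$ times a power of $\delta$, computed with the same normalization, and the union has small measure. The ratio of $\|V_2^IAf\|_p$ to $\|f\|_p$ then blows up like a power of $\log(1/\delta)$. For $p>2$ the estimate $\|f\|_p$ needs the small measure of the union; the overlap of the packets only helps this upper bound.

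The main obstacle is Step 1 made rigorous: the error terms in the Bessel asymptotics, the packets travelling in both directions ($\pm$), and the requirement that the square sum of increments really captures the displaced packets for $\gtrsim\delta^{-1}$ consecutive $t_j$. Here one must control cancellation between different $\nu$ on $\tilde R_\nu$. My first attempt is to randomize: insert random signs $\pm$ in front of the packets and use Khintchine's inequality, so that the expectation of the square function is comparable to $\big(\sum_\nu\sum_j |(A_{t_{j+1}}-A_{t_j})f_\nu|^2\big)^{1/2}$. That reduces the problem to single-packet computations and disposes of the cross terms.
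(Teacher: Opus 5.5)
Your reduction for $p\le 2$ is fine, but Step 2 fails for $p>2$ because the geometry is reversed. You put $f$ on the Besicovitch union $E=\bigcup_\nu R_\nu$ and read off $V_2^IAf$ on the disjoint reaches. For $p>2$, concentration on a small set makes $\|f\|_p$ large, not small. Take random signs and unit-amplitude packets on the $R_\nu$. Khintchine's inequality, then Jensen's inequality (note $p/2>1$), give
\[ \mathbb{E}\,\|f\|_p^p\gtrsim \int_E m^{p/2}\,dx\ \ge\ |E|^{1-\frac p2}\Big(\int m\,dx\Big)^{p/2}\gtrsim (\log(1/\delta))^{\frac p2-1},\qquad m=\sum_\nu \bbone_{R_\nu}, \]
since $\int m=\sum_\nu|R_\nu|\approx 1$. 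Your lower bound for $\|V_2^IAf\|_p$ on the reaches is $O(1)$. So your argument only certifies $\|V_2^IAf\|_p/\|f\|_p\gtrsim(\log(1/\delta))^{1/p-1/2}$, which tends to $0$. The sentence ``the overlap of the packets only helps this upper bound'' is backwards. For $F$ supported on $E$, H\"older gives $\|F\|_q\le |E|^{1/q-1/2}\|F\|_2$ only when $q\le 2$. What you describe is Fefferman's argument, and it works directly only at exponents below $2$.

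This is why the paper goes through duality, and since $V_2^IA$ is sublinear that needs a detour.
\begin{itemize}
\item Lemmas \ref{lem:LpL2chi} and \ref{lem:aux} turn an $L^p$ bound for $V_2^IA$ on $E(2\lambda)$ into a bound for $(\int|\vartheta(\tau)e^{i\tau|D|}Pg|^2\,d\tau)^{1/2}$. The tools are $V_2\hookrightarrow\dot B^{1/2}_{2,\infty}$, stationary phase on a sector, and rescaling.
\item Writing $u=u_1*u_2$ and applying Cauchy--Schwarz in $\tau$ then gives an $L^p$ bound for the linear multiplier $u(|D|-\lambda)$.
\item That bound holds equally on $L^{p'}$. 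Lemma \ref{lem:aux2} runs exactly your Besicovitch computation (Keich's sets, random signs, Khintchine) in $L^{p'}$, where $p'<2$ makes the small measure of $E$ work for you.
\end{itemize}

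If you want a direct counterexample instead, swap the roles. Put the packets on the disjoint reaches, so that $\|f\|_p\approx 1$. Use the half of $A_t$ that carries them back onto $E$, rescaling the configuration so that this happens for $t$ in a unit subinterval of $[1,2]$. Then the randomized square function is $\gtrsim m^{1/2}$ on $E$, and $\|m^{1/2}\|_p\ge|E|^{1/p-1/2}\|m^{1/2}\|_2\gtrsim(\log(1/\delta))^{1/2-1/p}$.

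Separately, your scales are inconsistent. $\delta\times 1$ packets are transported coherently by $e^{\pm it|D|}$ for $|t|\lesssim 1$ only if $\lambda\approx\delta^{-2}$; the paper uses width $2^{-n}$ with $\lambda=2^{2n}$. With $\lambda\approx\delta^{-1}$, a packet's transverse frequency spread is comparable to $\lambda$, so it has no preferred direction at all.
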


As pointed out above, this is well-known for $p\notin (2,4]$. In the interesting range $2<p\le 4$ we will obtain the result as an immediate consequence of a more quantitative version which we now present. Motivated somewhat by the presentation in \cite[\S I.1.]{Triebel1983}, we test $V^I_2A$ on $L^p$-functions of exponential type, i.e. functions whose  Fourier transform is supported in large balls. 

For $\la>1$ we let $E(\la)$ be the space of all tempered distributions whose Fourier transform is supported in $\{\xi:|\xi|\le\la\}$.
Define 
 \Be \label{eqn:Bp}B_p(\la) =\sup\,\big \{ \|V_2^I A f\|_{L^p(\bbR^2)} : \,\|f\|_{L^p(\bbR^2) } \le 1, \,\,f\in E(\la)
 \big\}.
 \Ee
Clearly $B_p(\la)$ is finite and increasing in $\la$; note that for all $1 \leq p \leq \infty$
\begin{equation}\label{eq:finiteness Bp}
\|V_2^IA f\|_p \lc \sup_{1\le t\le 2} \|\partial_t A_t f\|_p \lc \la \|f\|_p \text{  \quad for $f\in E(\la) $, } 
\end{equation}
which implies $B_p(\la)\le C\la$. More refined arguments in \cite{JSW} 
(related  to a square function estimate in \cite{Carbery1983} essentially 
via \cite{KanekoSunouchi}) yield \[
B_p(\la) \lc (\log \la)^{C} \quad  \text{  for $2\le p\le 4$,}
\] for a suitable positive exponent  $C$. 
Our quantification of Theorem \ref{thm:main} is 
\begin{thm}\label{thm:kakeya} Let $p>2$. 
Then there are constants  $\la_0>1$, $c>0$ depending on $p$ such that 
\[  B_p(\la) \ge c (\log\la)^{\frac 12-\frac 1p} \]
for all $\la>\la_0$.
\end{thm}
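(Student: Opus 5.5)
The plan is to test $V_2^IA$ on a sum $f=\sum_{k=1}^N f_k$ with $N\approx c\log\la$. The pieces $f_k$ will have essentially disjoint spatial supports, so that $\|f\|_p\approx N^{1/p}$. The circular means of the pieces, on the other hand, will pile up on a common set, where each contributes an independent oscillating term of size about $1$ to the $t$-variation. The $\ell^2$ structure of the $V_2$ norm then gives $V_2^IAf\gtrsim N^{1/2}$ on a set of measure $\gtrsim 1$. This yields the ratio $N^{1/2-1/p}\approx(\log\la)^{1/2-1/p}$. This is the Besicovitch/Kakeya mechanism behind C\'ordoba-type lower bounds for annular multipliers.

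\textbf{Step 1 (wave packets).} Use the asymptotics of the Fourier transform of the circle measure,
\[\widehat{d\sigma}(t\xi)=\sum_\pm c_\pm e^{\pm it|\xi|}(t|\xi|)^{-1/2}+O\big((t|\xi|)^{-3/2}\big).\]
From this, $A_t$ applied to a wave packet $\phi$ maps it to a translated packet, up to an oscillatory factor. The packet $\phi$ should have frequency near $\la e$, with $e$ a unit vector, and angular frequency width $\la^{1/2}$; it is then spatially concentrated on a $\la^{-1/2}\times 1$ tube. Its image under $A_t$ is concentrated on the tube translated by $\pm te$, with amplitude $\approx 1$ after renormalization, and carries the phase $e^{\pm it\la}$. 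I would rather work with plates or tubes of length $\delta$ and width $\delta^2$, with $\delta$ a power of $\la^{-1}$ to be chosen. In Step 3 I need directions separated by $2^{-k}$, $k\le N$, which fits inside the angular resolution $\la^{-1/2}$ provided $N\lesssim\log\la$.

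\textbf{Step 2 (Besicovitch arrangement).} Take a Perron-tree or Kakeya configuration of $M=2^N$ tubes $T_j$ with directions $e_j$. Their translates $\tilde T_j=T_j+2e_j$ (the image under $A_t$, $t\in I$) have union of measure $\approx$ the union of the $T_j$, so both are much smaller than $\sum_j|T_j|$. More precisely, I will use the standard construction in which the tubes $T_j$ are disjoint, while the translated tubes have multiplicity comparable to $N$ on a set of measure comparable to their total measure. Setting $f=\sum_j\epsilon_j\phi_j$ with random signs, Khintchine gives
\[\mathbb E\|f\|_p^p\approx\int\Big(\sum_j|\phi_j|^2\Big)^{p/2}\approx\sum_j|T_j|,\]
by disjointness. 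On the overlap region $\Omega$, $A_tf$ is a random sum of about $N$ packets, each of modulus about $1$ and each oscillating in $t$ with frequency $\la$. Taking for $t_i$ the $1/\la$-spaced partition of $I$ and applying Khintchine pointwise gives
\[\mathbb E\big(V_2^IAf\big)\gtrsim\Big(\sum_i\Big|\sum_j\epsilon_j\big(A_{t_{i+1}}-A_{t_i}\big)\phi_j\Big|^2\Big)^{1/2}\gtrsim N^{1/2}\,\la^{1/2}\cdot(\text{normalization})\]
on $\Omega$. Here the extra factor $\la^{1/2}$ comes from the number of partition points. It must be checked against the normalization, and it is plausibly absorbed by choosing instead only about $N$ well-separated times, or by using the Kaneko--Sunouchi-type square function.

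\textbf{Step 3 (bookkeeping, the main obstacle).} What remains is to balance the two sides: the overlap measure $|\Omega|$, the multiplicity $\approx N$ available with $M=2^N$ directions, and the requirement that the direction separation $2^{-N}$ stays above the uncertainty scale set by $\la$ (which forces $N\le c\log\la$). I also have to control the error terms in the asymptotics of $\widehat{d\sigma}$ and the tails of the wave packets. These are Schwartz tails, and they must not destroy the disjointness of the initial tubes in $L^p$. I expect this to be the delicate part: getting exactly $N^{1/2}$ on $\Omega$ against $N^{1/p}$ for $\|f\|_p$ after normalizing. I would first set up the model with a single time difference $A_{t+\pi/\la}-A_t$, which already doubles each packet's oscillation. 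If the exponent comes out right there, I would then upgrade to the full $V_2$ norm by choosing the partition adapted to the phases $e^{i t\la}$.
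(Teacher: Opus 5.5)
\textbf{Verdict: genuine gap.} Your mechanism is a plausible direct alternative to the paper's route: disjoint input tubes whose images under $A_t$ pile up on a Besicovitch set, random signs, and a lower bound for $V_2$ from a $\lambda^{-1}$-spaced partition. But the proposal stops exactly where the proof has to happen. The claim that $V_2^IAf\gtrsim N^{1/2}$ on the overlap set while $\|f\|_p\lesssim 1$ is asserted, not derived, and several of the choices you float for Step 3 would break it.

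\emph{Normalization.} If $|\phi_j|\approx 1$ on its tube, then $|A_t\phi_j|\approx \lambda^{-1/2}$ on the moving image, by the decay of $\widehat{d\sigma}$. So each increment over a step $\pi/\lambda$ has size $\approx\lambda^{-1/2}$. You need all $\approx\lambda$ increments from a $t$-interval of length $\gtrsim 1$ to get $V_2\approx 1$ per packet. The factor $\lambda^{1/2}$ is not extraneous; keeping only $\approx N$ well-separated times would lose it and prove nothing.

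\emph{Time dependence.} $A_t$ does not send $T_j$ to a fixed translate $T_j+2e_j$. The image moves with $t$ and covers a given point only for $t$ in an interval of length about its extent along $e_j$. So you need tubes of length $\approx 1$, parallel to their motion, of width $\approx\lambda^{-1/2}$ (for coherence up to $t=2$), with the reach distance inside $(1,2)$, e.g.\ a rescaled Keich configuration. Plates of length $\delta$ and width $\delta^2$ with $\delta$ a power of $\lambda^{-1}$ fail. Either they decohere before $t\approx 1$, or they cover a point only for a $t$-interval of length $\delta$, costing $\delta^{1/2}$, which swamps any logarithmic gain.

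\emph{Tails.} This is not bookkeeping. A unit-length packet obtained by localizing to a $\lambda^{1/2}\times 1$ frequency box has Schwartz tails only on the unit scale. The overlap set lies at distance $O(1)$ along each tube from the input tubes, so each $\phi_j$ still has size $\ge\eta$ there, with $\eta$ independent of $\lambda$. Since the multiplicity there is $\approx N$, H\"older (in expectation over the signs) gives $\|f\|_p\gtrsim \eta N^{1/2-1/p}$. That is as large as the lower bound you hope to prove for $\|V_2^IAf\|_p$, so the gain disappears. You need building blocks whose tails along the tube decay on a scale much finer than $1$. One option is sharp modulated indicators $\bbone_{R_j}e^{i\mu\inn{x}{e_j}}$ on the disjoint tubes. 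Band limitation then comes from testing $V_2^IA\,\phi(D/\lambda)$ with $\phi\in C^\infty_c$ supported in the unit ball, so that $\|\phi(D/\lambda)\|_{L^p\to L^p}\lesssim 1$. You would then have to compute $A_t$ on these functions explicitly.

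\emph{How the paper avoids this.} It never builds test functions for $V_2$, so none of the three issues arises.
\begin{enumerate}
\item Lemma \ref{lem:LpL2chi} converts $V_2$ bounds into a square-function bound with the factor $\lambda^{-1/2}$, via $V_2\hookrightarrow \dot B^{1/2}_{2,\infty}$. This is where your $\lambda^{1/2}$ is accounted for.
\item Lemma \ref{lem:aux} uses stationary phase, rescaling in $t$, and $u(|D|-\lambda)=\frac{1}{2\pi}\int \widehat u_1\widehat u_2\, e^{-i\lambda\tau}e^{i\tau|D|}\,d\tau$ with Cauchy--Schwarz in $\tau$. This bounds the annular multiplier by $1+\sup_{\rho\ge 1}\rho^{-N}B_p(\rho\lambda)$.
\item The Besicovitch lower bound, Lemma \ref{lem:aux2}, is Fefferman's argument in dual form on $L^{p'}$, applied to sharply cut-off modulated indicators of Keich's triangles. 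Band limitation comes from the multiplier itself, and there is no $t$ at all.
\item A short bootstrap with $B_p(\lambda)\lesssim\lambda$ removes the supremum over $\rho$.
\end{enumerate}
If you keep the direct route, you must carry out the three points above explicitly. In return you would avoid the subordination step and the bootstrap.
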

We are only interested in the range $2<p\le 4$ as larger lower bounds are known for $p>4$ (see \cite{JSW}, \cite{BeltranOberlinRoncalSeegerStovall}). 
In the proof  of the theorem we  reduce matters to  lower bounds 
for multipliers of the form $\chi(|\xi|-\la)$  for large $\la$, with nonnegative $\chi \in C^\infty_c(\bbR)$.  C\'ordoba  \cite{Cordoba1979} showed that for $4/3\le p\le 4$ the operator norm of the corresponding convolution operators is $O( (\log\la)^{|1/p-1/2|})$,  and this also matches the lower bound in Lemma \ref{lem:aux2} below. 

\subsubsection*{Notation} For  nonnegative  quantities $a,b$, we  write $a \lesssim b$ or $a\lesssim_L b$
 to indicate $a \leq C b$ for some constant $C$ which may depend  on some list $L$.

\section{Proof of Theorem \ref{thm:kakeya}}
In order to establish Theorem \ref{thm:kakeya}, we prove several auxiliary lemmas which link the problem to a class of radial Fourier multipliers. Our proof is inspired by a result of Kaneko and Sunouchi \cite{KanekoSunouchi}, who proved the pointwise equivalence of two global square functions first occurring in work by Stein: one associated with Bochner--Riesz means \cite{SteinActa58} (see also \cite{Carbery1983, ChristPAMS, Seeger-crelle1986, LeeRogersSeeger2014}), and one associated with spherical means \cite{Stein1976, SteinWainger-Bull78}.  

For $\la_1<\la_2$, let $\Eann(\la_1,\la_2)$ 
be the space of tempered distributions whose Fourier transform is compactly supported in $\{\xi:\la_1\le |\xi|\le \la_2\}$. Let $\sigma$ denote the normalized surface measure on the unit circle $S^1$. Also, given a distribution $\mu$, we define the dilate $\mu_t:= t^{-2} \mu(t^{-1} \cdot)$,  in the sense of distributions. 

\begin{lemma} \label{lem:LpL2chi} Let $\la\ge 1$ and $p>2$. Then 
\Be\label{eqn:localizedLpL2}  \Big\|  \Big(\int_{5/4}^{7/4} |(\chi \sigma)_t* g |^2 \ud t \Big)^{1/2} \Big\|_p  \lc_p \la^{-1/2} (B_p(2\la)+1) \|g\|_p\Ee
for all $g\in \Eann(\la/4,\la)$ and all $\chi\in C_c^\infty (\bbR^d\setminus \{0\})$.
\end{lemma}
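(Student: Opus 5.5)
We compare the square function in \eqref{eqn:localizedLpL2} with a discretized $2$-variation of $t\mapsto A_tf$ at scale $\delta=c/\la$, for suitable $f\in E(2\la)$ with $\|f\|_p\lc \la^{-1/2}\|g\|_p$. The first step is a purely one-variable observation. Fix $\delta>0$ small and let $h$ be any function on $I$. For each shift $s\in[0,\delta]$ the points $t_j=1+s+j\delta$ give an admissible partition, so
\[V_2^I h\,^2\ge \sum_j |h(1+s+(j+1)\delta)-h(1+s+j\delta)|^2 .\]
Averaging in $s$ over $[0,\delta]$ yields
\[\delta^{-1}\int_{5/4}^{7/4}|h(t+\delta)-h(t)|^2\,\ud t\le (V_2^Ih)^2 .\]
Applying this pointwise in $x$ to $h(t)=A_tf(x)$ with $\delta=c/\la$ gives
\[\Big\|\Big(\int_{5/4}^{7/4}|A_{t+\delta}f-A_tf|^2\ud t\Big)^{1/2}\Big\|_p\lc \la^{-1/2}B_p(2\la)\|f\|_p\]
for $f\in E(2\la)$. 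In addition, $|A_tf|\le |A_{1}f|+V_2^IAf$, so $\|(\int_{5/4}^{7/4}|A_tf|^2\ud t)^{1/2}\|_p\lc (1+B_p(2\la))\|f\|_p$.

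The second step is to represent $(\chi\sigma)_t*g$ through these two expressions. By stationary phase, for $|\xi|\approx\la$,
\[\widehat{\chi\sigma}(t\xi)=\sum_\pm e^{\pm it|\xi|}\,m_\pm(t\xi)\]
with $m_\pm$ symbols of order $-1/2$ (depending on the direction of $\xi$ through $\chi$), and similarly
\[\hat\sigma(t\xi)=\sum_\pm e^{\pm it|\xi|}a_\pm(t\xi),\]
where $a_\pm$ are \emph{elliptic} of order $-1/2$. The multiplier of $A_{t+\delta}-A_t$ is then
\[\sum_\pm e^{\pm it|\xi|}\big[(e^{\pm i\delta|\xi|}-1)a_\pm(t\xi)+O(\delta|\xi|^{-1/2}\cdot\la^{-1})\big].\]
On $\la/4\le|\xi|\le\la$ the factor $\delta|\xi|$ lies in $[c/4,c]$. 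Hence the $2\times2$ system formed by the multipliers of $A_t$ and $A_{t+\delta}-A_t$ in the unknowns $e^{\pm it|\xi|}$ has determinant
\[a_+a_-\big[(e^{-i\delta|\xi|}-1)-(e^{i\delta|\xi|}-1)\big]=-2i\,a_+a_-\sin(\delta|\xi|),\]
which is of size $\approx\la^{-1}$ times a nonvanishing factor for fixed small $c$. Solving the system gives
\[(\chi\sigma)_t*g=(A_{t+\delta}-A_t)f_{1,t}+A_tf_{2,t}+R_tg,\]
where $f_{i,t}=\beta_i(t,D)g$ with symbols $\beta_i$ of order $0$ relative to $\la^{-1/2}\cdot\la^{1/2}$. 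Thus $\|f_{i,t}\|_p\lc\la^{-1/2}\|g\|_p$ uniformly in $t$, the $f_{i,t}$ have frequency support in $|\xi|\le\la$, and $R_t$ has a symbol of order $-3/2$.

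The third step removes the $t$-dependence of $\beta_i(t,\xi)$. I would expand $\beta_i(t,\xi)$ in a Fourier series in $t$ on an interval containing $[5/4,7/4]$, which is legitimate since the symbols are smooth in $t$ with derivative bounds uniform in $\xi$ on the annulus. This writes $f_{i,t}=\sum_k e^{2\pi ikt/L}f_{i,k}$ with rapidly decaying coefficients $\|f_{i,k}\|_p\lc(1+|k|)^{-N}\la^{-1/2}\|g\|_p$, and each $f_{i,k}\in E(2\la)$. The unimodular factors do not affect the square function in $t$. The first step applies to each $f_{i,k}$, and summing in $k$ gives the bound $\la^{-1/2}(B_p(2\la)+1)\|g\|_p$ for the main terms. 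The remainder $R_tg$ is handled crudely: its kernel is an $L^1$ function of mass $O(\la^{-3/2})$ uniformly in $t$, so Minkowski's inequality gives an $O(\la^{-1})\|g\|_p$ contribution.

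I expect the main obstacle to be the second step. There one must check carefully that the error terms produced by replacing $a_\pm((t+\delta)\xi)$ by $a_\pm(t\xi)$ are of genuinely lower order, namely relative size $\delta\la^{-1}\cdot\la=O(\la^{-1})$ after accounting for the $\la^{-1}$ determinant. One must also confirm that the inverse symbols $\beta_i$ satisfy Mikhlin-type bounds on the annulus uniformly in $t$, so that they are $L^p$ bounded with norm $O(\la^{-1/2})$. If this elimination proves delicate, the fallback is to choose $c$ small and treat the perturbation by a Neumann series.
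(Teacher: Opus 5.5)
Your first step is correct. Averaging over shifted partitions is a neat one-scale substitute for the embedding $V_2\hookrightarrow\dot B^{1/2}_{2,\infty}$. Your third step is also fine. The argument breaks in the second step, at the claim $\|f_{i,t}\|_p\lc\la^{-1/2}\|g\|_p$.

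On the annulus, $\widehat{\chi\sigma}(t\xi)$ and $\hat\sigma(t\xi)$ have the same size $\la^{-1/2}$. So the coefficients you get by solving the $2\times 2$ system are order-zero symbols of size $O(1)$ (up to powers of $c$), not $O(\la^{-1/2})$. The numerators are products of two symbols of order $-1/2$, hence of size about $\la^{-1}$, exactly like the determinant $-2i\,a_+a_-\sin(\delta|\xi|)$.

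Concretely, suppose $\chi$ lives on a small arc, so that $m_+=0$ on the relevant cone. Then the coefficient of $A_t$ is $\beta_2=m_-/\big(a_-(1+e^{-i\delta|\xi|})\big)$, which is comparable to $|\chi|$ there. Hence your estimate controls the term $A_tf_{2,t}$ only by $(1+B_p(2\la))\|f_{2,t}\|_p\approx(1+B_p(2\la))\|g\|_p$. The factor $\la^{-1/2}$, which is the whole content of the lemma, is lost.

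This cannot be patched by a better fixed-time bound. For $p>2$, the norm of $A_1$ on $L^p$ functions with frequencies $\approx\la$ decays only like $\la^{-1/p}$. The gain $\la^{-1/2}$ comes from the $t$-integration. In your setup that gain is captured only by the factor $\delta^{1/2}$ from your first step, which applies to differences, not to $A_t$ itself.

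The repair is never to use the undifferenced $A_t$. Instead, take two differences, for example $A_{t+\delta}-A_t$ and $A_{t+2\delta}-A_t$; your first step applies verbatim with $2\delta$ in place of $\delta$. With $z=e^{i\delta|\xi|}$, the leading part of the new determinant is
$a_+a_-\big(z^2-2z+2z^{-1}-z^{-2}\big)=-4i\,a_+a_-\sin(\delta|\xi|)\big(1-\cos(\delta|\xi|)\big)$.
This is of size about $c^3\la^{-1}$ when $\delta|\xi|\in[c/4,c]$. So the coefficients are order-zero symbols bounded by $C(c)$ and smooth in $t$, and both main terms pick up the factor $\delta^{1/2}\approx\la^{-1/2}$.

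With this change your argument works, and it is a genuinely different and more elementary route than the paper's. The paper applies $V_2\hookrightarrow\dot B^{1/2}_{2,\infty}$ to $\upsilon(t)A_tf$, which needs Bourgain's circular maximal theorem to handle the cutoff $\upsilon$. It then uses an external lemma locating the $t$-frequencies of $\upsilon(t)A_tf$ at $|\tau|\approx\la$, to get the $\la^{-1/2}$ gain for $A_tf$ itself. Finally it passes from $\sigma$ to $\chi\sigma$ by writing $(\chi\sigma)_t*g$ as a superposition of $\sigma_t$ acting on modulations of $g$. Inverting the difference operators replaces the last two ingredients and avoids the maximal theorem entirely.
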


\begin{proof}
Let $\upsilon\in C^\infty_c(\bbR)$ be supported in $[1,2]$ and such that $\upsilon(t)=1$ in a neighborhood of  $[5/4, 7/4]$. Define
\Be \label{eqn:sAdef}\sA f(x,t):=\upsilon (t) A_tf(x)\Ee
{and consider the associated variation-norm operator $V_2^I \sA f(x)$.} 
It is easy to see {via the triangle inequality and the mean value theorem} that
\[ V_2^I \sA f(x)\le \|\upsilon\|_\infty V_2^I Af(x) 
+\|\upsilon'\|_\infty \sup_{t\in[1,2]}|A_t f(x)|,\]
and by using Bourgain's circular maximal theorem we get for $2<p \leq \infty$
\[\| V_2^I \sA f\|_p \lc(B_p(2\la)+C_p) \|f\|_p\\
\quad \text{ if $f\in E(2\la)$.}
\]
Let $\{\Lambda_j\}_{j \in \mathbb{Z}}$ be  a standard dyadic frequency decomposition $\{\La_j\}_{j\in \bbZ}$
in the  $t$ variable (so that $\La_j $ localizes to frequencies $\tau$ with $2^{j-1}\le |\tau|\le 2^{j+1}$).
Then the Besov space  seminorm for  $a\in \dot B^{1/2}_{2,\infty} $ is given by $\sup_{j\in \bbZ} 2^{j/2} \|\La_j a\|_2$. By the continuous embedding $V_2\hookrightarrow  \dot B^{1/2}_{2,\infty}$ (\cite{BerghPeetre}) we have  
\begin{align*} & 2^{j/2} \| \La_j \sA f \|_{L^p(L^2) }  \le \|\sup_{j>0} 2^{j/2} | \La_j \sA f| \|_{L^p(L^2)} 
\le  \| \sA f\|_{L^p(\dot B_{2,\infty}^{1/2} )}
\\&\notag \lc \| V_2^I \sA f\|_p
\lc (B_p(2\la)+C_p) \|f\|_p \quad \text{ for $f\in E(2\la)$}.
\end{align*}
We use this bound for  $2^{-10}\le \la/2^j\le 2^{10}$. For $f\in \Eann(\la/8, 2\la)$ we  refer to an error estimate involving a negligible upper bound  for other values of $j$ to \cite[Lemma 2.5]{BeltranOberlinRoncalSeegerStovall} and we get 
 \[ \sum_{\substack {j\ge 0 :\\
 2^j\notin [2^{-10}\la, 2^{10}\lambda]}} \|\La_j \sA f \|_{L^p(L^2)} \lc_N\la^{-N} \|f\|_p \text{ if } f\in L^p \cap \Eann(\la/8, 2\la) .\] 
Consequently we have 
\[  \|  \sA f \|_{L^p(L^2) }  \lc \la^{-1/2}  (B_p(2\la)+C_p+1) \|f\|_p, \quad\text{for $f\in L^p\cap \Eann(\la/8,2\la)$}.\]
Since $\upsilon(t)=1$ on $[5/4,7/4]$ we obtain,
for $f\in L^p\cap \Eann(\la/8,2\la)$, 
\begin{equation}\label{eq:2lambda}
    \Big\|  \Big(\int_{5/4} ^{7/4} |A_t f|^2 \ud t\Big)^{1/2} \Big \|_{p }  \lc \la^{-1/2}  (B_p(2\la)+C_p+1) \|f\|_p.
\end{equation}  

We wish to replace $A_t$ by the convolution operator with $(\chi \sigma)_t$ where $\chi$ has small compact support. To this end, for $g \in \Eann(\lambda/4, \la)$ we write
\[
(\chi\sigma)_t* g(x)=(2\pi)^{-2}
\int t^2\widehat \chi(t\eta) e^{i\inn {x}{\eta}}
\sigma_t* (g e^{-i\inn{\cdot}{\eta}}) (x) \ud \eta.
\]
Observe 
that $t^2\widehat \chi(t\eta)\lc_N (1+|\eta|)^{-N}$ {for $t \in [5/4,7/4]$}. Hence, {for $p > 2 $,}
\begin{align*} 
&\Big \|  \Big(\int_{5/4} ^{7/4} |(\chi\sigma)_t* g|^2 \ud t\Big)^{1/2} \Big \|_{p } \\
  &\qquad \lc
 \int_{|\eta|\le \la/8} (1+|\eta|)^{-N} \Big\|
 \Big(\int_{5/4} ^{7/4} |A_t [g e^{-i\inn{\cdot}{\eta}}]|^2 \ud t\Big)^{1/2} \Big \|_{p } \ud \eta  \\
\\& \qquad \qquad + 
\int_{|\eta|\ge \la/8} (1+|\eta|)^{-N} 
 \Big(\int_{5/4} ^{7/4} \|A_t [g e^{-i\inn{\cdot}{\eta}}]\|_p^2 \ud t\Big)^{1/2} \ud \eta.
 \end{align*}
For the first integral, we observe that
for $|\eta|\le \la/8 $ and $g\in \Eann(\la/4, \la)$ the modulated function $g e^{-i\inn{\cdot}{\eta}}$ belongs to $\Eann(\la/8, 9\la/8)$, and thus  one can apply \eqref{eq:2lambda} with $f=g e^{-i\inn{\cdot}{\eta}}$. For the second integral we get a decay factor of $O(\la^{1-N})$ from the $\eta$-integration.
This leads to the claimed inequality \eqref{eqn:localizedLpL2}.
\end{proof}

In what follows we use the differential notation for convolution operators  that are given by a    Fourier  multiplier $m$, i.e. 
$m(D) f= \cF^{-1}[m\widehat f]$.

\begin{lemma} \label{lem:aux} 
Let $u_1$, $u_2$ be $C^\infty_c(\bbR)$ functions  supported in $[-1,1]$ and let $u=u_1*u_2$.
Then, for any $\lambda \geq 1$,  $p>2$ and any integer $N \geq 0$,
\[\|u(|D|-\la)\|_{L^p(\bbR^2)\to L^p(\bbR^2) }  \le C(u_1,u_2,p,N) (1+\sup_{\rho\ge 1} \rho^{-N} B_p(\rho \la)).
\]
\end{lemma}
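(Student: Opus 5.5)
The plan follows the Kaneko--Sunouchi idea: write the annular multiplier as a superposition in $t$ of half-wave operators, then reduce to Lemma~\ref{lem:LpL2chi} after a Cauchy--Schwarz in $t$.

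\textbf{Setup.} We may assume $\la$ is large, since for bounded $\la$ the multiplier $u(|\xi|-\la)$ is a fixed Schwartz function of $\xi$. Since $\widehat u=\widehat{u_1}\,\widehat{u_2}$, Fourier inversion gives
\[
u(|\xi|-\la)=\frac1{2\pi}\int_{\bbR} \widehat{u_1}(t)\,\widehat{u_2}(t)\,e^{-i\la t}e^{it|\xi|}\ud t .
\]
Hence $u(|D|-\la)f(x)=\frac1{2\pi}\int \widehat{u_1}(t)\,G(x,t)\ud t$ with $G(x,t)=\widehat{u_2}(t)e^{-i\la t}e^{it|D|}f(x)$. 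We first replace $f$ by $P f$, where $P$ is a smooth frequency cutoff to $\{\la-2\le|\xi|\le\la+2\}$. Then we split $f$ by a smooth angular partition of unity into finitely many sectors $\Gamma_\nu$ of small fixed aperture; these angular cutoffs are bounded on $L^p$.

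\textbf{Dyadic pieces in $t$.} Decompose the $t$-line by a smooth partition $\{\psi_k\}$ into pieces where $|t|\approx\rho=2^k$ with $\rho\ge \la^{-1}$, plus one piece $|t|\lesssim\la^{-1}$. On the piece $|t|\lesssim \la^{-1}$ the multiplier $\int\psi\,\widehat{u_1}\widehat{u_2}\,e^{it(|\xi|-\la)}\ud t$ is a symbol of order $0$ on the annulus, with derivative bounds uniform in $\la$, so Mikhlin gives an $O(1)$ bound. For each $\rho$-piece, Cauchy--Schwarz in $t$ gives
\[
|T_\rho f(x)|\le \|\psi_k\widehat{u_1}\|_{L^2}\Big(\int_{|t|\approx\rho}|G(x,t)|^2\ud t\Big)^{1/2}.
\]
Here $\|\psi_k\widehat{u_1}\|_2\lesssim_N \rho^{1/2}\min(1,\rho^{-N})$. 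Only one side needs an $L^p(L^2)$ bound with $p>2$, which is exactly what Lemma~\ref{lem:LpL2chi} provides.

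\textbf{Main step: the half-wave square function on one sector.} Fix a sector $\Gamma_\nu$ and a sign of $t$. Choose $\chi\in C^\infty_c(\bbR^2\setminus\{0\})$ supported near the single point of $S^1$ whose stationary phase produces $e^{\pm it|\xi|}$ for $\xi\in\Gamma_\nu$. Stationary phase then gives
\[
\widehat{\chi\sigma}(t\xi)=e^{\pm it|\xi|}\,b(t\xi)+r(t\xi),
\]
where $b$ is an elliptic symbol of order $-1/2$ on the relevant cone and $r$ is rapidly decreasing. Thus on $\Gamma_\nu$ we can write
\[
e^{it|\xi|}=\widehat{\chi\sigma}(t\xi)\,b(t\xi)^{-1}+\text{error},\qquad |b(t\xi)^{-1}|\approx (|t|\la)^{1/2}.
\]
To remove the $t$-dependence of $b(t\xi)^{-1}(|t|\la)^{-1/2}$, write $t=\rho s$ with $s$ in a compact interval and expand this factor in a Fourier series in $s$. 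The coefficients are $\la$-uniform symbols of order $0$ in $\xi$, rapidly decaying in the series index, so Mikhlin applies to each of them.

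Now rescale $x\mapsto x/(c\rho)$. This maps the dilation range $|t|\approx\rho$ to $[5/4,7/4]$ and the frequency shell $|\xi|\approx\la$ to $|\xi|\approx c\rho\la$, while preserving $L^p$ norms up to powers of $\rho$ that cancel against the $\ud t$ Jacobian. Lemma~\ref{lem:LpL2chi} then gives the bound
\[
(\rho\la)^{1/2}\,(\rho\la)^{-1/2}\big(B_p(2c\rho\la)+1\big)\rho^{-1/2}\|f\|_p .
\]
Combined with the factor $\rho^{1/2}\min(1,\rho^{-N})$ above, the $\rho$-piece is bounded by
\[
\min(1,\rho^{-N})\big(1+B_p(2c\rho\la)\big)\|f\|_p .
\]
For $\rho\le1$ we have $B_p(2c\rho\la)\le B_p(\la')$ for some $\la'\lesssim\la$. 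The rapid decay of $\widehat{u_2}$ on those pieces is not needed, but the geometric sum over $\rho\ge\la^{-1}$ must still converge. I would secure this by also Cauchy--Schwarzing with $\widehat{u_2}\psi_k$ weights, giving an extra factor $\rho^{1/2}$, or by taking the $L^2$ norm of $\widehat{u_1}$ on the dyadic shell, whose measure is $\rho$. For $\rho\ge1$ the $\rho^{-N}$ decay yields $\sup_{\rho\ge1}\rho^{-N}B_p(\rho\la)$ after adjusting $N$. Summing over $\nu$, both signs and $k$ then gives the lemma.

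\textbf{Main obstacle.} The delicate part is the single-phase representation $e^{\pm it|\xi|}=\widehat{\chi\sigma}(t\xi)\,b(t\xi)^{-1}$ on a sector, with $\la$-uniform control of $b^{-1}$ as a multiplier jointly in $(t,\xi)$. I would first try the angular-sector/stationary-phase route described above. The fallback is the explicit Bessel asymptotics of $\widehat\sigma$: solve a $2\times2$ system using two cutoffs $\chi_\pm$ at antipodal points, so that both phases are isolated.
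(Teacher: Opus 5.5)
Your architecture is the paper's: factor $\widehat u=\widehat{u_1}\,\widehat{u_2}$, apply Cauchy--Schwarz in $t$, localize to sectors and use stationary phase to pass between $(\chi\sigma)_t*{}$ and $e^{\pm it|D|}$, rescale to dyadic shells $|t|\approx\rho$, and use the Schwartz decay of $\widehat{u_1}$ for $\rho\ge1$. However, two steps are wrong as written. The second is exactly the summation over $\lambda^{-1}\le\rho\le1$ that you left unresolved.

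First, you cannot insert a cutoff $P$ to $\{\lambda-2\le|\xi|\le\lambda+2\}$. An $O(1)$-width annular multiplier at radius $\lambda$ is not bounded on $L^p$ uniformly in $\lambda$ for $p>2$ (cf.\ Lemma~\ref{lem:aux2} and scaling). It is an operator of exactly the kind being estimated, so bounding by $\|Pf\|_p$ is circular. Use a dyadic cutoff $\beta(|D|/\lambda)$ instead: it is a Mikhlin multiplier uniformly in $\lambda$, and it is all that Lemma~\ref{lem:LpL2chi}, your small-$t$ Mikhlin argument and your coefficients $c_m(\xi)$ need. The individual pieces $T_\rho$ are not supported in the thin annulus anyway, so each must be composed with such a cutoff.

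Second, your rescaling exponent has the wrong sign. With $t=\rho s$ and $f_\rho=f(\rho\,\cdot)$ one has $e^{i\rho s|D|}f(x)=e^{is|D|}f_\rho(x/\rho)$, hence
\[ \Big\|\Big(\int_{\rho}^{2\rho}|e^{it|D|}f|^2\,dt\Big)^{1/2}\Big\|_p=\rho^{1/2}\rho^{2/p}\Big\|\Big(\int_1^2|e^{is|D|}f_\rho|^2\,ds\Big)^{1/2}\Big\|_p . \]
The factor $\rho^{2/p}$ cancels against $\|f_\rho\|_p=\rho^{-2/p}\|f\|_p$, but nothing cancels the Jacobian $\rho^{1/2}$. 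Rescaling Lemma~\ref{lem:LpL2chi} directly gives the same: $(\rho\lambda)^{1/2}\cdot\rho^{1/2}\cdot(\rho\lambda)^{-1/2}$. So the shell square function is $\lesssim\rho^{1/2}(1+B_p(c\rho\lambda))\|f\|_p$, which is the paper's $R^{1/2}(B_p(2R\lambda)+1)$, not $\rho^{-1/2}(\cdots)$.

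With your exponent, every shell $\lambda^{-1}\le\rho\le1$ contributes $\approx 1+B_p$, and the sum is $\approx\log\lambda$. That loss would destroy Theorem~\ref{thm:kakeya}, whose conclusion is only $(\log\lambda)^{1/2-1/p}$. Neither remedy you propose supplies the missing factor: $\|\psi_k\widehat{u_1}\|_2\lesssim\rho^{1/2}$ is already counted, and $\widehat{u_2}$ is merely bounded near $t=0$.

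With the correct exponent, each piece is $\lesssim\rho\min(1,\rho^{-N})(1+B_p(c\rho\lambda))\|f\|_p$. Over $\rho\le1$ this sums to $\lesssim 1+B_p(c\lambda)$, using monotonicity of $B_p$. Over $\rho\ge1$ it sums to $\lesssim1+\sup_{\rho\ge1}\rho^{-N'}B_p(\rho\lambda)$. The paper even uses a single global Cauchy--Schwarz and the weaker sum $\sum_{\lambda^{-1}\le2^k\le1}2^{k/2}$.

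Finally, your ``main obstacle'' is milder than you fear. There is no need to invert the full symbol $b(t\xi)$ or to expand in a Fourier series in $s$. As in the paper:
\begin{enumerate}
\item Keep only the leading term $c\,t^{-1/2}|\xi|^{-1/2}e^{-it|\xi|}$, which factors in $(t,\xi)$.
\item Dispose of the order $-3/2$ remainder using its $O(\lambda^{-1})$ $L^p$ bound, uniform in $t\in[5/4,7/4]$, together with Minkowski's inequality ($p\ge2$).
\item Undo $|D|^{-1/2}$ on the annulus.
\item Rescale the resulting half-wave estimate.
\end{enumerate}
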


\begin{proof}
Let $\chi_1$, $\chi$ be $C^\infty_c$ functions, supported in a narrow sector and a neighborhood of a unit vector, so that $\chi(x)=1$ on the support of $\chi_1$. Let $Pg = \chi_1(\frac{D}{|D|})g$, which is a Fourier localization of $g$ to a sector.
By Lemma \ref{lem:LpL2chi}, 
\Be\notag  \Big\|  \Big(\int_{5/4}^{7/4} |(\chi \sigma)_t* Pg |^2 \ud t \Big)^{1/2} \Big\|_p  \lc \lambda^{-1/2} (B_p(2\la)+1) \|Pg\|_p \text{ for $g\in \Eann(\tfrac \la 4,\la)$.} \Ee

We use the method of stationary phase 
to get the usual asymptotics of the Fourier transform of $\chi\sigma$ in the conic support of $\chi_1(\xi/|\xi|)$. 
We obtain 
\[(\chi\sigma)_t *P g(x)=  ct^{-1/2} \cF^{-1} [\chi(\tfrac{\xi}{|\xi|}) |\xi|^{-1/2} e^{-it|\xi|} \widehat {Pg}](x) 
+R_{1} g(x,t)\]
where  the remainder term $R_{1}$ is a smoothing operator 
of order $-3/2$ satisfying  for $g\in \Eann(\tfrac \la 4,\la)$ the (negligible) bound
\[\|R_{1} g (\cdot, t) \|_{L^p} \lc \la^{-1}  \| g\|_p, \quad 1\le p\le \infty.\]
 Since $\chi(\tfrac{D} {|D|}) Pg= Pg$ (by the support properties of $\chi_1$ and $\chi$)  we get 
\Be \label{eqn:wavemicrolocal}   \Big\|  \Big(\int_{5/4}^{7/4}|e^{-it|D|}  Pg |^2 \ud t \Big)^{1/2} \Big\|_p  \lc  (B_p(2\la)+1) \|Pg\|_p, \text{ for $g\in \Eann(\la/4, \la)$. }\Ee

It will be convenient to switch to an inequality which involves an integral over $\bbR$ instead of $[5/4,7/4]$. We first look at constributions for $|t|\leq 2^{10}\lambda^{-1}$.
If $\zeta\in C^\infty$ is supported in $(2^{-5}, 2^5) $ then it is not hard to see that the multiplier $e^{it|\xi|}\zeta(\la^{-1}   |\xi|)$ is the Fourier transform of an $L^1$ function with $L^1$ norm uniformly bounded in $|t|\le 2^{10}\la^{-1}$.  Hence, for $p\ge 2$
\begin{multline*}  \Big\|  \Big(\int_{-2^{10}\la^{-1}}^{2^{10}\la^{-1} }
|e^{-it|D|}  Pg |^2 \ud t \Big)^{1/2} \Big\|_p  
\\
\lc
  \Big(\int_{-2^{10}\la^{-1}}^{2^{10}\la^{-1} }
   \|e^{-it|D|}  Pg \|^2_p \ud t \Big)^{1/2} \lc  \la^{-1/2} 
\|g\|_p\end{multline*} 
provided that $g\in \Eann(\la/4,\la)$.

{Next, we look at contributions for $|t| \geq \lambda^{-1}$. }
Let $a\in C^\infty$ supported in $\{\xi:1/4<|\xi|<4\} $ and let $\widetilde a(\xi)=a(\xi)\chi_1(\xi/|\xi|)$. Then for 
 $R\geq \la^{-1}$ we have 
\begin{multline*}
 \Big(\int_{5R/4}^{7R/4} |e^{-it|D|}  a(\la^{-1}D)Pg(x) |^2 \frac{\ud t}{t}  \Big)^{1/2}
 \\=\Big(\int_{5/4}^{7/4} |e^{-is|D|}  \widetilde a(R^{-1}\la^{-1} D ) [g(R\cdot)] (R^{-1}x)  |^2 \frac{\ud s}{s} \Big)^{1/2}.
\end{multline*} Thus, by scaling, we get from \eqref{eqn:wavemicrolocal} that 
\[  \Big\|  \Big(\int_{5R/4}^{7R/4} |e^{-it|D|}  a(\la^{-1}D)Pg |^2 \ud t \Big)^{1/2} \Big\|_p  \lc R^{1/2} (B_p(2R\la)+1) 
\|g\|_p
\]
for all $R\ge \la^{-1}$. 
Changing $t$ to $-t$ yields a similar inequality for  the interval $[-7R/4,-5R/4]$. Combining these estimates, for any Schwartz function  
$\vth$ 
and $N_1>0$ we obtain
\begin{multline*} \label{eqn:globalintegral} \Big\|  \Big(\int  |\vth(t) e^{it|D|}  Pg |^2 \ud t \Big)^{1/2} \Big\|_p \\ \lc C(\vth) 
\Big(\la^{-1/2}  +  \sum_{\la^{-1}\le 2^k\le 1} 2^{k/2} B_p(\la 2^{k+1}) +\sum_{k\ge 0} 2^{k(\frac 12-N_1)} B_p(\la 2^{k+1}) \Big)
\|g\|_p\end{multline*}
provided that $g\in \Eann(\la/4, \la)$. {For the second sum we have used the rapid decay of $\vth$ for $|t| \geq 1$.}
This implies,  for $g\in \Eann(\la/4, \la)$ and $N \geq 0$ that
\Be \label{eqn:globalintegral} \Big\|  \Big(\int  |\vth(t) e^{it|D|}  Pg |^2 \ud t \Big)^{1/2} \Big\|_p \lc_N( 1+ \sup_{\rho\ge 1}\rho^{-N} B_p(\rho\la)),
\Ee
{using that $B_p(\lambda 2^{k+1}) \leq B_p(2\lambda)$ for $\lambda^{-1} \leq 2^k \leq 1$.}
We now come to the inequality asserted in the lemma. 
Note that 
\[ u(|D|-\la) P g(x)= \frac{1}{2\pi} \int\widehat u_2(\tau) \widehat u_1(\tau) e^{-i\la \tau} e^{i\tau|D|} P g(x) \ud\tau,\]
and by the Cauchy--Schwarz inequality
\[ | u(|D|-\la) P g(x)| \lc
\Big( \int| \widehat u_1(\tau)    e^{i\tau|D|} P g(x) |^2 \ud\tau\Big)^{1/2}.\]
If we apply \eqref{eqn:globalintegral} with $\vth(\tau) = \widehat 
u_1(\tau)$ we get the inequality asserted in the lemma,
first for  functions $g \in \Eann (\la/4, \la)$ but by the support property of $u$  it is implied for   general $g\in L^p$.
\end{proof}

A  last lemma deals with lower bounds for such multipliers.

\begin{lemma}\label{lem:aux2} There exist $\eps>0$, $\la_1=\la_1(p)>1$ and $c>0$  such that for all $\la>\la_1$ the following holds for $p>2$: 

For all nonnegative $L^\infty$ functions $u$ supported in $[-2\eps^2,2\eps^2]$ that are bounded below by $1$ in $[-\eps^2,\eps^2]$,  and all integers $n\geq 10$,
\Be\label{eq:lowerbd-Cord}\| u(|D|-2^{2n})\|_{L^p(\bbR^2)\to L^p(\bbR^2)}  \ge c  n^{\frac 12-\frac 1p}\,.
\Ee
\end{lemma}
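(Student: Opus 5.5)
The plan is to prove \eqref{eq:lowerbd-Cord} by duality against a Kakeya/Besicovitch-type configuration of tubes. We use a logarithmic construction of Keich type with $N=2^n$ directions, in the spirit of C\'ordoba's and Fefferman's ball multiplier arguments, and exploit only the positivity of $u$ on $[-\eps^2,\eps^2]$. Write $R=2^{2n}$ and $T=u(|D|-R)$.

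\textbf{Geometry of wave packets.} Fix directions $e_j$, $j=1,\dots,N=2^n$, spaced by angle $\theta\approx 2^{-n}$. Consider the plate $P_j=\{\xi: |\xi\cdot e_j-R|\le \eps^2/2,\ |\xi\cdot e_j^\perp|\le \eps 2^{n}\}$. Since the sag of the circle of radius $R$ over an arc of length $\eps 2^n$ is about $\eps^2 2^{2n}/R=\eps^2$, we have $P_j\subset\{\xi: ||\xi|-R|\le \eps^2\}$ for small $\eps$, so $u(|\xi|-R)\ge 1$ on $P_j$.

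Choose $g_j(x)=e^{iR e_j\cdot x}\,\phi(\eps^2 x\cdot e_j,\ \eps^{-1}2^{n}x\cdot e_j^\perp)$ with $\widehat\phi\ge0$ compactly supported so that $\widehat g_j$ is supported in $P_j$. Then $g_j$ is essentially a smooth bump adapted to a tube $\widetilde T_j$ of length $\eps^{-2}$ and width $\eps 2^{-n}$ in direction $e_j$; its eccentricity is $\approx 2^n/\eps$, comparable to the angular separation, as needed.

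Let $h_j(x)=\overline{?}$ be replaced by the translate $h_j=g_j(\cdot+L e_j)$ with $L=c_0\eps^{-2}$. Then $h_j$ is adapted to the tube $T_j=\widetilde T_j-Le_j$, and by Plancherel
\[
\langle Tg_j,h_j\rangle=(2\pi)^{-2}\int u(|\xi|-R)\,|\widehat g_j(\xi)|^2 e^{-iL\xi\cdot e_j}\,\ud\xi .
\]
On $P_j$ the phase $L\xi\cdot e_j$ equals $LR+O(c_0)$, so after multiplying $h_j$ by the constant $e^{iLR}$ we get
\[
\Re\langle Tg_j,h_j\rangle\ge \tfrac12\|g_j\|_2^2\approx |\widetilde T_j| .
\]

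\textbf{Randomization.} Take independent random signs $\epsilon_j$. Since the mixed terms average out,
\[
\sum_j\Re\langle Tg_j,h_j\rangle=\mathbb E\,\Re\Big\langle T\Big(\sum_j\epsilon_jg_j\Big),\sum_j\epsilon_jh_j\Big\rangle .
\]
By H\"older and Khintchine this is at most
\[
C\|T\|_{p\to p}\,\Big\|\Big(\sum_j|g_j|^2\Big)^{1/2}\Big\|_p\,\Big\|\Big(\sum_j|h_j|^2\Big)^{1/2}\Big\|_{p'} .
\]
Rescale the picture by $\eps^2$ so the tubes have length $1$ and width $\delta\approx\eps^3 2^{-n}$; the factor $\eps^3$ only costs a constant in $n$. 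Now choose the positions of the $T_j$ from a Keich-type construction. We need the union $E=\bigcup_j T_j$ to satisfy $|E|\lesssim_\eps 1/n$, while the translated tubes $\widetilde T_j=T_j+Le_j$ (the ``reaches'') have bounded overlap, so $\|\sum_j\bbone_{\widetilde T_j}\|_\infty\lesssim1$ and $\sum_j|\widetilde T_j|\approx 1$.

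Then $\|(\sum|g_j|^2)^{1/2}\|_p\lesssim(\sum_j|\widetilde T_j|)^{1/p}\approx1$. Since $p'<2$, H\"older on $E$ gives
\[
\Big\|\Big(\sum_j|h_j|^2\Big)^{1/2}\Big\|_{p'}\lesssim |E|^{\frac1{p'}-\frac12}\Big(\sum_j|T_j|\Big)^{1/2}\lesssim n^{-(\frac12-\frac1p)} .
\]
Combining with the left side $\gtrsim\sum_j|\widetilde T_j|\approx1$ yields $\|T\|_{p\to p}\gtrsim n^{\frac12-\frac1p}$. This holds for $n\ge10$, with constants depending only on $\eps$ and $p$; the $\la_1(p)$ restriction is harmless.

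\textbf{Main obstacles.} The main difficulty is the geometric input. We need a Besicovitch/Keich configuration of $2^n$ tubes of eccentricity $\approx2^n$ with union of measure $O(1/n)$ whose reaches, shifted by a fixed fraction $c_0$ of the tube length, are essentially disjoint. I would take Keich's construction, based on the maps $x\mapsto\sum_k \epsilon_k(j)2^{-k}x/k$ built from binary digits of the direction, and verify the reach-disjointness for a small enough $c_0$, or equivalently that the reaches fill a set of measure $\gtrsim1$ with bounded overlap. If exact bounded overlap fails, it suffices to pass to a subfamily, or to replace the $\ell^\infty$ bound by the $L^p$ bound $\|\sum_j\bbone_{\widetilde T_j}\|_{p/2}^{1/2}\lesssim1$.

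The second, more routine issue is the Schwartz tails of $g_j$ and $h_j$ outside their tubes. These are handled by using the rapid decay of $\phi$ to show that $\|(\sum|h_j|^2)^{1/2}\|_{p'}$ is still dominated by its part on a slight enlargement of $E$. Keich's bound survives such fixed dilations of the tubes.
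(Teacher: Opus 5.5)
You have a genuine gap, and it sits exactly in the geometric input you flag as the main obstacle: the configuration you need cannot exist, whatever positions you choose. The estimate you use for positivity, $|L\xi\cdot e_j-LR|\le c_0/2$ on $P_j$, also gives by Plancherel $\|h_j-e^{iLR}g_j\|_2\le \frac{c_0}{2}\|g_j\|_2$. So $h_j$ and $g_j$ are essentially the same function.

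Geometrically, in your rescaled picture $T_j$ has length $1$ and $\widetilde T_j=T_j+c_0e_j$, so $T_j\cap\widetilde T_j$ contains a tube of length $1-c_0$. If $\sum_j\bbone_{\widetilde T_j}\le K$, then
\[
|E|\ge\Big|\bigcup_j (T_j\cap\widetilde T_j)\Big|\ge \frac1K\sum_j|T_j\cap\widetilde T_j|\ge \frac{1-c_0}{K}\sum_j|T_j|\gtrsim \frac1K ,
\]
which contradicts $|E|\lesssim 1/n$. Your fallback $\|\sum_j\bbone_{\widetilde T_j}\|_{p/2}\lesssim1$ fails the same way: H\"older on $E$ gives $\sum_j|T_j\cap\widetilde T_j|\lesssim n^{-(1-2/p)}$. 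Passing to a subfamily cannot help, since the offending overlap is between each tube and its own reach.

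Enlarging $L$ is not an option either. Since $\widehat g_j$ is supported in $P_j$, of thickness $\eps^2$ along $e_j$, $g_j$ must have length $\gtrsim\eps^{-2}$, while positivity needs $L\eps^2\ll1$. Indeed, for $u=\bbone_{[-2\eps^2,2\eps^2]}$ your $g_j$ are eigenfunctions of $T$, and $\inn{Tg_j}{h_j}$ is just the overlap of $g_j$ with its own translate. With test functions frequency-localized to the plates, $T$ never moves mass from a tube to a disjoint reach, so your H\"older--Khintchine chain cannot produce growth in $n$.

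Separately, Keich's bound does not survive fixed dilations along the axis. The triangles swept a distance $2\sqrt2$ along their axes contain the pairwise disjoint reaches, of total measure $1/2$. So the axial Schwartz tails of $h_j$ would also be a problem.

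The missing idea is to use test functions much shorter than the coherence length $\eps^{-2}$ of the kernel, and to let the multiplier itself do the radial localization, so that the output spreads along the axis well beyond the input. This is what the paper does. It takes $f_\nu=\bbone_{T(\ell_\nu)}e^{i\inn{y}{2^{2n}e_\nu}}$, which lives on a Keich triangle of length $\le\sqrt2$ and width $2^{-n}$, and localizes only the angular frequency, by $\ka_\nu*_2$ at scale $\eps2^n$. The kernel $K_\nu$ of $u(|D|-2^{2n})L_\nu$ has a nonnegative symbol supported in a $2^6\eps^2\times 2^{n+2}\eps$ plate around $2^{2n}e_\nu$. Hence $\Re\big(e^{-i2^{2n}\inn{x}{e_\nu}}K_\nu(x)\big)\ge c\eps^3 2^n$ for $|\inn{x}{e_\nu}|\le 2^4$, $|\inn{x}{e_\nu^\perp}|\le 2^{-n+4}$. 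Therefore $|K_\nu*f_\nu|\ge c$ on the reach $\vec T(\ell_\nu)$, a translate by $2\sqrt2$ --- longer than the triangle, but far shorter than $\eps^{-2}$. That is why Keich's pairwise disjoint reaches are compatible with $|\bigcup_\nu T(\ell_\nu)|<1/n$.

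The rest runs as you intended, but entirely in $L^{p'}$: duality, Khintchine, the lower bound on the disjoint reaches, and H\"older on the union of the triangles. For the last step, $|\ka_\nu*_2 f_\nu|$ is dominated by the Hardy--Littlewood maximal function of $f_\nu$ in the $x_2$ variable, and the Fefferman--Stein inequality applies. Since the $f_\nu$ are sharply supported, there are no tails to control.
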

This is proved by a variant of Fefferman's proof for the ball multiplier \cite{FeDisc}, using the Besicovitch construction, together with a standard randomization argument.
It is known but not well-documented that Fefferman's proof also gives lower bounds for multipliers such as in  Lemma \ref{lem:aux2}; in fact the second-named author had presented a version of the lemma in a graduate course at the University of Chicago in 1985.  
Because of the lack of an appropriate reference in the literature in the precise form needed here, we give the proof for the convenience of the reader.
Other applications of Fefferman's argument have been used, for example,  in Fourier restriction theory \cite{BCSS}, \cite{Craig25}, for resolvent bounds for certain partial differential equations \cite{KenigTomas-TAMS1980}, \cite{Ruiz-PAMS1983-1} and recently in a local theory for cone multipliers with applications to  Cauchy--Szeg\H o projections \cite{BallestaGarrigos}.
 We remark  that the 
 lower bound \eqref{eq:lowerbd-Cord} matches for $2\le p\le 4$ the upper bound 
\[\| u(|D|-2^{2n})\|_{L^p(\bbR^2)\to L^p(\bbR^2)}  \le C n^{\frac 12-\frac 1p}, \quad 
2\le p\le 4,
\]
for which one requires an  additional regularity assumption, say    $u\in C^2$.  It follows 
from C\'ordoba's work \cite{Cordoba1979}.

\begin{proof}[Proof of Lemma \ref{lem:aux2}]
We use the construction by Keich \cite{Keich} which gives a slightly better upper bound in the construction of Besicovitch sets than the one used in, say, \cite{FeDisc}. Consider line segments $\ell =\{ (s, as+b), s\in [0,1]\}$ where $a=a(\ell)\in [0,1]$ and $b=b(\ell)\in [-1,0]$.  We write $\ell(s):=a(\ell)s+b(\ell)$. For given large $n\in \bbN$  let $T(\ell)\equiv T^n(\ell)$ be the triangle with vertices $(0,\ell(0))$, $(0,\ell(0)-2^{-n})$, $(1, \ell(1))$.
Let $\vec T(\ell)$ be the {\it reach of $T(\ell)$}, defined to be  the triangle obtained by translating $T(\ell)$ by $2\sqrt 2$ along the direction of $\ell$.

Fix $n\in \bbN$, $n\ge 10$.  It is shown in  \cite{Keich} that there exists a collection of line segments $\{\ell_\nu\}_{\nu=0}^{2^{n}-1}$ with $a_\nu \equiv a(\ell_\nu)=\nu 2^{-n}$ such that the triangles $T(\ell_\nu)$ satisfy 
\Be\label{eqn:measBesic}\meas \big (\bigcup_{\nu=0}^{2^n-1} T(\ell_\nu)\big)
< n^{-1} \Ee
and the corresponding reaches $\vec T(\ell_\nu)$ are pairwise disjoint.


For each $\nu=0,\dots, 2^{n}-1$, let
\[ e_\nu=\frac {(1,a_\nu)}{\sqrt{1+a_\nu^2}}, \qquad
e_\nu^\perp =\frac {(-a_\nu,1)}{\sqrt{1+a_\nu^2}} 
\]
and consider the function
$$f_\nu(y) = \bbone_{T(\ell_\nu)}(y) e^{i \inn{y}{2^{2n}e_\nu}}.
$$
Let $\eps>0$ be sufficiently small, chosen to satisfy the requirements of the forthcoming argument. Let $\psi \in C_c^\infty$ be non-negative, supported in $(-1/2,1/2)$ and bounded below by $1$ in $(-1/4,1/4)$.
Define  \[ h_{\nu}(\xi_2) = \psi \big(2^{-n}\eps^{-1} (\xi_2- 2^{2n}\tfrac{a_\nu}{\sqrt{1+a_\nu^2}} )\big), \quad \ka_\nu(x_2) = \frac{1}{2\pi} \int  h_{\nu}(\xi_2) e^{ix_2\xi_2} \ud\xi_2\] 
and let $L_\nu$ denote the operator given by $L_\nu g:= \kappa_\nu \ast_2 g$, where $\ast_2$ denotes the convolution in the second variable. We will first show the lower bound
\Be \label{eq:Knufnu pre}
|u(|D|-2^{2n}) L_\nu f_\nu (x)|
 \ge c, \quad x\in \vec T(\ell_\nu),
\Ee
for some $c>0$. To this end, let $K_\nu$ denote the convolution kernel of the operator $u(|D|-2^{2n}) L_\nu$, given by
\Be \label{eqn:Knudef}
K_{\nu}(x) =(2\pi)^{-2} \int u( |\xi|- 2^{2n} )h_{\nu}(\xi_2) e^{i\inn {x}{\xi}} \ud\xi.
\Ee
Then 
\begin{multline*}K_\nu(x) e^{-i \inn{x}{2^{2n} e_\nu}} =\\
(2\pi)^{-2} \int u (|\xi|- 2^{2n}) h_{\nu}(\xi_2) e^{i\inn {x}{e_\nu} \inn{e_\nu}{\xi-2^{2n} e_\nu}} 
e^{i\inn {x}{e_\nu^\perp} \inn{e_\nu^\perp} {\xi}} \ud \xi
\end{multline*}
and a computation shows that on the support of integration we have
\begin{equation}\label{eq:claim}
    |\inn{e_\nu}{\xi-2^{2n} e_\nu}|\le 2^{6} \eps^2
\quad \text{ and } \quad   |\inn {\xi}{e_\nu^\perp}| \le 2^{n+2}\eps.
\end{equation}
To see this, let $\xi=(\xi_1,\xi_2)$ satisfying $\big||\xi|-2^{2n} \big|\le 2\eps^2$  and 
$|\xi_2-2^{ 2n} \frac{a_\nu}{(1+a_\nu^2)^{1/2}}|\le 2^{n-1}\eps$. Set $\eta=2^{-2n} \xi$, so that $|\eta|=1+\varrho$ with $|\varrho|\le \eps^2 2^{-2n+1} $ and 
$\eta_2= \frac{a_\nu}{\sqrt{1+a_\nu^2}} +v$ with $|v|\le \eps 2^{-n -1}$. 
We show that $|\eta_1- \frac{1}{\sqrt{1+a_\nu^2}}|\le \eps 2^{-n+1}. $
Write \begin{align*} 
\eta_1&= \sqrt{|\eta|^2-\eta^2_2}= \big((1+\varrho)^2-(\tfrac{a_\nu}{\sqrt{1+a_\nu^2}} +v)^2\big)^{1/2}
\\
&= \big( 1- \tfrac{a_\nu^2}{1+a_\nu^2}  +2\varrho+\varrho^2-2\tfrac{a_\nu}{\sqrt{1+a_\nu^2}} v-v^2 \big)^{1/2}
= \big( \tfrac{1}{1+a_\nu^2} +\Delta \big)^{1/2} 
\end{align*} where $\Delta= 2\varrho+\varrho^2-2\tfrac{a_\nu}{\sqrt{1+a_\nu^2}} v-v^2$ and thus $|\Delta|\le  \eps^2 2^{2n+2}+\eps^4 2^{-4n+2} + \eps 2^{-n} +\eps^22^{-2n-2}\le \eps 2^{-n+1}.$
Hence
\[ |\eta_1- \tfrac{1}{\sqrt{1+a_\nu^2}}  |=\big|
\big( \tfrac{1}{{1+a_\nu^2}} +\Delta \big)^{1/2} -\big( \tfrac{1}{1+a_\nu^2} \big)^{1/2} \big|\le |\Delta|\le \eps 2^{-n+1} \]
and then also 
\[ |\eta-e_\nu| \le  |\eta_1- \tfrac{1}{\sqrt{1+a_\nu^2}}|+ |\eta_2 -\tfrac{a_\nu}{\sqrt{1+a_\nu^2}} | \le \eps 2^{-n+2}. \]
Next write $\eta = e_\nu+\om_1e_\nu+\om_2 e_\nu^\perp$ and observe that $\sqrt{\om_1^2+\om_2^2} =|\eta-e_\nu|$, so $|\om|^2\le (\eps 2^{-n+2})^2$. We have
$\inn{e_\nu}{\eta-e_\nu} = \inn{e_\nu}{\om_1e_\nu +\om_2 e_\nu^\perp }=\om_1$.
Moreover
\begin{align*} |\eta|-1 &= |e_\nu+\om_1e_\nu+\om_2e_\nu^\perp|-1 = \big((1+\om_1)^2+\om_2^2\big)^{1/2} -1
\\
&=
 (1+2\om_1+|\om|^2)^{1/2}-1  =\om_1+ \tfrac{|\om|^2} 2+ E(\om)
\end{align*} and using $|(1+s)^{1/2}-1- \tfrac s2|\le \frac 14(1-|s|)^{-3/2}\frac {s^2}2  $, which follows from Taylor's expansion on both sides of the inequality, we estimate the error by 
 $|E(\om)| \le \tfrac{1}8\tfrac{(2|\om_1|+|\om|^2)^2}
{(1-2|\om_1|-|\om|^2)^{3/2} }\le |\om|^2$   (recall  $|\om|^2\le (\eps 2^{-n+2})^2$ and $n\ge 10$).  
Hence, since $|\eta|=1+\varrho$ with $|\varrho|\le \eps^22^{-2n+1} $, 
\[|\om_1| \le \big||\eta|-1\big| +  2|\om|^2 = \big||\eta|-1\big|+ 2|\eta-e_\nu|^2  \le 2^{-2n+1}\eps^2+ 2^{-2n+5} \eps^2\] and thus $|\om_1|\le 2^{-2n+6}\eps^2$. 
We get \begin{align*} &|\inn{e_\nu}{\eta-e_\nu}|=|\om_1|\le 2^{-2n+6}\eps^2,
\\
&|\inn {\eta}{e_\nu^\perp}|= |\inn {\eta-e_\nu}{e_\nu^\perp}| \le |\eta-e_\nu| \le 2^{-n+2}\eps,
\end{align*} 
and from this 
 $|\inn{e_\nu}{\xi-2^{2n} e_\nu}|\le 2^{6}\eps^2$
and $|\inn {\xi}{e_\nu^\perp}| \le 2^{n+2}\eps$, which correspond to the claimed bounds \eqref{eq:claim}.

Hence, choosing $\varepsilon$ sufficiently small,
\Be\notag
\Re\big( e^{-i 2^{2n}\inn{x}{e_\nu}} K_\nu(x) \big) \ge c\eps^3 2^{n} \text{ if } |\inn{x}{e_\nu}|\le 2^4 \text{ and }  |\inn{x}{e_\nu^\perp}|\le  2^{-n+4}.
\Ee
As a consequence  we get the lower bound \eqref{eq:Knufnu pre} on the reach of $T(\ell_\nu)$, namely 
\Be \label{eqn:Knufnu} 
|K_\nu* f_\nu (x)|
=\Big| \int K_\nu(x-y) e^{-i\inn{x-y}{2^{2n} e_\nu} } \bbone_{T(\ell_\nu)}(y) \ud y \Big|
 \ge c, \quad x\in \vec T(\ell_\nu).
\Ee

Now, define for $\om\in [0,1]$ 
\Be \notag f^\om(y) =\sum_{\nu=0}^{2^n-1} r_\nu(\om) \kappa_\nu \ast_2 f_\nu
\Ee
where $(r_\nu)_{\nu\in \bbN}$ is the sequence of Rademacher functions. If \[\cC_{p,n}=\big\|u(|D|-2^{2n})\big\|_{L^p\to L^p}\] we have by duality 
\[
 \|u(|D|-2^{2n}) f^\om\|_{p'} \le \cC_{p,n} \|f^\om\|_{p'}, \quad \om\in [0,1].
 \] Integrating in $\omega$ and using the above definitions we get
 \Be \notag
\Big(\int_0^1 \Big\|\sum_{\nu=0}^{2^n-1} r_\nu(\om) K_\nu*f_\nu \Big\|_{p'}^{p'} \ud \om \Big)^{1/p'} 
\le \cC_{p,n} \Big(\int_0^1 \Big\|\sum_{\nu=0}^{2^n-1} r_\nu(\om) \ka_\nu*\ci{2} f_\nu\Big\|_{p'}^{p'} \ud \om\Big)^{1/p'}.
\Ee
We interchange the $x$ and $\omega$ integration on both sides and using both lower and upper bounds in Khinchine's inequality (see e.g. \cite[Appendix D]{Stein70SI}) we get
\Be\label{eqn:Kh}
\Big\|\Big(\sum_{\nu=0}^{2^n-1} |
K_\nu*f_\nu |^2\Big)^{1/2}\Big\|_{p'} \le C(p) \cC_{p,n} 
\Big\|\Big(\sum_{\nu=0}^{2^n-1} |
\ka_\nu*\ci{2}f_\nu |^2\Big)^{1/2}\Big\|_{p'} .
\Ee
We first give a lower bound for the left hand side of \eqref{eqn:Kh}. By the disjointness of the $\vec T(\ell_\nu)$, and \eqref{eqn:Knufnu} 
\begin{align} \notag
&\Big\|\Big(\sum_{\nu=0}^{2^n-1} |
K_\nu*f_\nu |^2\Big)^{1/2}\Big\|_{p'} 
\ge \Big(\sum_{\nu'=0}^{2^n-1} 
\int_{\vec T(\ell_{\nu'})} \Big(\sum_{\nu=0}^{2^n-1} |
K_\nu*f_\nu |^2\Big)^{p'/2}
\ud x\Big)^{1/p'}
\\ \label{eqn:lowerbdkak}
&\ge \Big(
\sum_{\nu'=0}^{2^n -1} 
\int_{\vec T(\ell_{\nu'})} |K_{\nu'}*f_{\nu'} |^{p'} \ud x\Big)^{1/p'}
\ge c \, \Big(\sum_{\nu=0}^{2^{n}-1} |\vec T(\ell_\nu)|\Big)^{1/p'} \ge 2^{-1/p'} c.
\end{align}

We give an upper bound for the right hand side of 
\eqref{eqn:Kh}.
Use the uniform pointwise bound
\[|\ka_\nu(x_2) |\le C2^n (1+2^n|x_2|)^{-2}\] and the fact that all $f_\nu$ are supported in $\cup_{\nu=0}^{2^n-1}T(\ell_\nu)  $ which by \eqref{eqn:measBesic} is a set of measure $<1/n$. It follows  
\begin{align} \notag & \Big\|\Big(\sum_{\nu=0}^{2^n-1} |
\ka_\nu*_2f_\nu |^2\Big)^{1/2}\Big\|_{p'}
\le C
\Big\|\Big(\sum_{\nu=0}^{2^n-1} |
f_\nu |^2\Big)^{1/2}\Big\|_{p'}
\\ \notag
&\le C \,  \meas\Big(\bigcup_{\nu=0}^{2^n-1} T(\ell_\nu) \Big)^{1/p'-1/2}\Big\|\Big(\sum_{\nu=0}^{2^n-1} |
f_\nu |^2\Big)^{1/2}\Big\|_{2}
\\   \label{eqn:upperbdkak} &
\le Cn^{-1/p'+1/2}\Big(\sum_{\nu=0}^{2^n-1} |T(\ell_\nu)|\Big)^{1/2}\le C 2^{-1/2} n^{1/p-1/2}.
\end{align}
Combining \eqref{eqn:Kh}, \eqref{eqn:lowerbdkak} and \eqref{eqn:upperbdkak}  we get 
$$ c2^{-1/p'}  \le C2^{-1/2}  C(p) \cC_{p,n}  n^{1/p-1/2} $$ and thus the assertion of the lemma.
\end{proof}

\begin{proof}[Proof of Theorem \ref{thm:kakeya}, conclusion] 
By a scaling argument we can replace $2^{2n}$ in Lemma \ref{lem:aux2} with $\la\in [2^{2n}, 2^{2n+2}]$. From    Lemma \ref{lem:aux} and Lemma \ref{lem:aux2}  it follows that there is a $\mu_0>2$, $c_0>0$ such that for all $\mu>\mu_0$
\[\sup_{\rho\ge 1} B_p(\rho\mu)\rho^{-N} \ge c_0 (\log \mu)^{1/2-1/p}.\]
The trivial bound \eqref{eq:finiteness Bp} implies $B_p(\la)\le C\la$ and hence 
\[\sup_{\rho\ge \mu } B_p(\rho\mu)\rho^{-N} \le C_N\mu  \sup_{\rho\ge \mu}  \rho^{1-N} \le C_N \mu^{2-N}.\]
Then  
\begin{align*}
    B_p(\mu^2) & \ge \sup_{1\le \rho\le \mu } B_p(\rho\mu)\rho^{-N} 
\ge \sup_{\rho\ge 1 } B_p(\rho\mu)\rho^{-N} 
-C_N \mu^{2-N}\\
& \ge c_0 (\log\mu )^{1/2-1/p} -C_N \mu^{2-N}
\end{align*}
and thus we get for $\mu>\mu_1=\max\{\mu_0, \exp( (2C_2/c_0)^{\frac{2p}{p-2}})\}$
\[ B_p(\mu^2)\ge \tfrac{c_0}{2} (\tfrac 12\log \mu^2)^{\frac 12-\frac 1p}\]
which implies the theorem.
\end{proof} 

\begin{remark} Let $\fA f(x,t)= \chi(t) A_tf(x)$ where $\chi$ is a nontrivial bump function  compactly supported in $(1,2)$. An examination of our proof (in particular the proof of Lemma 
\ref{lem:LpL2chi}) 
also shows that for $p>2$
\begin{equation} \label{eq:Besinfty}  \sup_{f\in \cS(\bbR^2) } \sup_{\|f\|_p\le 1} \| \fA  f\|_{L^p(\dot B^{1/2}_{2,\infty} )} =\infty. 
\end{equation} 
In view of the embedding $V_2\hookrightarrow \dot B^{1/2}_{2,\infty} $ this gives a stronger lower bound than stated in Theorem \ref{thm:main}. Note that $V_2$ embeds into $L^\infty$ while $B^{1/2}_{2,\infty}$ does not.  Moreover, we may strengthen this formulation of our result by replacing in \eqref{eq:Besinfty} the $B^{1/2}_{2,\infty}$ norm or $\dot B^{1/2}_{2,\infty} $ semi-norm of $a(t)= \fA(x,t)$  with 
\[\sup_{n>0}  \|\La_n a\|_2 2^{n/2} \om_n, \] 
where  $\omega_n$  may be  small for large $n$, such that $\om_n=n^{\frac 1p-\frac 12+\eps}$;  in fact we can choose any sequence 
 satisfying  
  $
\limsup_{n\to \infty} n^{1/2-1/p} \om_n =\infty.$
\end{remark}

\subsection*{\it Acknowledgements}  D.B. is supported by the grants RYC2020-029151-I and PID2022-140977NA-I00,  funded by MICIU/AEI/10.13039/501100011033, by ``ESF Investing in your future" and by FEDER, UE. L. R. is supported by the grants CEX2021-001142-S and PID2023-146646NB-I00 funded by MICIU/AEI/10.13039/501100011033 and
by ESF+, by BERC 2022-2025 of the Basque Government and IKERBASQUE. A.S. is supported in part by NSF grant 2348797.  
\bibliographystyle{plain} 
\bibliography{Reference}

@article {Bourgain1986,
    AUTHOR = {Bourgain, Jean},
     TITLE = {Averages in the plane over convex curves and maximal operators},
   JOURNAL = {J. Analyse Math.},
  FJOURNAL = {Journal d'Analyse Math\'ematique},
    VOLUME = {47},
      YEAR = {1986},
     PAGES = {69--85},
      ISSN = {0021-7670},
   MRCLASS = {42B25 (52A10)},
  MRNUMBER = {874045},
MRREVIEWER = {K. J. Falconer},
       URL = {https://doi.org/10.1007/BF02792533},
}

@article {Stein1976,
    AUTHOR = {Stein, Elias M.},
     TITLE = {Maximal functions. {I}. {S}pherical means},
   JOURNAL = {Proc. Nat. Acad. Sci. U.S.A.},
  FJOURNAL = {Proceedings of the National Academy of Sciences of the United
              States of America},
    VOLUME = {73},
      YEAR = {1976},
    NUMBER = {7},
     PAGES = {2174--2175},
      ISSN = {0027-8424},
   MRCLASS = {42A40 (43A85)},
  MRNUMBER = {0420116},
MRREVIEWER = {Alberto Torchinsky},
}

@article {JSW,
    AUTHOR = {Jones, Roger L. and Seeger, Andreas and Wright, James},
     TITLE = {Strong variational and jump inequalities in harmonic analysis},
   JOURNAL = {Trans. Amer. Math. Soc.},
  FJOURNAL = {Transactions of the American Mathematical Society},
    VOLUME = {360},
      YEAR = {2008},
    NUMBER = {12},
     PAGES = {6711--6742},
      ISSN = {0002-9947},
   MRCLASS = {42B20 (42B15 42B25)},
  MRNUMBER = {2434308},
MRREVIEWER = {Javier Duoandikoetxea},
       DOI = {10.1090/S0002-9947-08-04538-8},
       URL = {https://doi.org/10.1090/S0002-9947-08-04538-8},
}

@article {ChristPAMS,
    AUTHOR = {Christ, Michael},
     TITLE = {On almost everywhere convergence of {B}ochner--{R}iesz means in
              higher dimensions},
   JOURNAL = {Proc. Amer. Math. Soc.},
  FJOURNAL = {Proceedings of the American Mathematical Society},
    VOLUME = {95},
      YEAR = {1985},
    NUMBER = {1},
     PAGES = {16--20},
      ISSN = {0002-9939},
     CODEN = {PAMYAR},
   MRCLASS = {42B25 (47G05)},
  MRNUMBER = {796439 (87c:42020)},
MRREVIEWER = {Shan Zhen Lu},
       DOI = {10.2307/2045566},
       URL = {http://dx.doi.org/10.2307/2045566},
}

@book {Stein70SI,
    AUTHOR = {Stein, Elias M.},
     TITLE = {Singular integrals and differentiability properties of
              functions},
    SERIES = {Princeton Mathematical Series, No. 30},
 PUBLISHER = {Princeton University Press, Princeton, N.J.},
      YEAR = {1970},
     PAGES = {xiv+290},
   MRCLASS = {46.38 (26.00)},
  MRNUMBER = {0290095 (44 \#7280)},
MRREVIEWER = {R. E. Edwards},
}

@Article{FeDisc,
  author     = {Fefferman, Charles},
  title      = {The multiplier problem for the ball},
  journal    = {Ann. of Math. (2)},
  year       = {1971},
  volume     = {94},
  pages      = {330--336},
  fjournal   = {Annals of Mathematics. Second Series},
  issn       = {0003-486X},
  mrclass    = {42A18 (42A92 47B99)},
  mrnumber   = {0296602 (45 \#5661)},
  mrreviewer = {R. Larsen},
}

@Article{BCSS,
  author     = {Beckner, William and Carbery, Anthony and Semmes, Stephen and Soria, Fernando},
  title      = {A note on restriction of the {F}ourier transform to spheres},
  journal    = {Bull. London Math. Soc.},
  year       = {1989},
  volume     = {21},
  number     = {4},
  pages      = {394--398},
  coden      = {LMSBBT},
  doi        = {10.1112/blms/21.4.394},
  fjournal   = {The Bulletin of the London Mathematical Society},
  issn       = {0024-6093},
  mrclass    = {42B10},
  mrnumber   = {998638 (90i:42023)},
  mrreviewer = {Satoru Igari},
  url        = {http://dx.doi.org/10.1112/blms/21.4.394},
}

@article {Lepingle1976,
    AUTHOR = {L\'{e}pingle, Dominique},
     TITLE = {La variation d'ordre {$p$} des semi-martingales},
   JOURNAL = {Z. Wahrscheinlichkeitstheorie und Verw. Gebiete},
  FJOURNAL = {Zeitschrift f\"{u}r Wahrscheinlichkeitstheorie und Verwandte
              Gebiete},
    VOLUME = {36},
      YEAR = {1976},
    NUMBER = {4},
     PAGES = {295--316},
   MRCLASS = {60G45},
  MRNUMBER = {420837},
MRREVIEWER = {Norihiko Kazamaki},
       DOI = {10.1007/BF00532696},
       URL = {https://doi-org.ezproxy.library.wisc.edu/10.1007/BF00532696},
}

@article {BerghPeetre,
    AUTHOR = {Bergh, J\"{o}ran and Peetre, Jaak},
     TITLE = {On the spaces {$V_{p}$} {$(0<p\leq \infty )$}},
   JOURNAL = {Boll. Un. Mat. Ital. (4)},
    VOLUME = {10},
      YEAR = {1974},
     PAGES = {632--648},
   MRCLASS = {46E35 (41A15)},
  MRNUMBER = {0380389},
MRREVIEWER = {A. Kufner},
}

@incollection {LeeRogersSeeger2014,
    AUTHOR = {Lee, Sanghyuk and Rogers, Keith M. and Seeger, Andreas},
     TITLE = {Square functions and maximal operators associated with radial
              {F}ourier multipliers},
 BOOKTITLE = {Advances in analysis: the legacy of {E}lias {M}. {S}tein},
    SERIES = {Princeton Math. Ser.},
    VOLUME = {50},
     PAGES = {273--302},
 PUBLISHER = {Princeton Univ. Press, Princeton, NJ},
      YEAR = {2014},
   MRCLASS = {42A20 (42B15 42B25)},
  MRNUMBER = {3329855},
MRREVIEWER = {Jan-Olav R\"{o}nning},
}

@article {KanekoSunouchi,
    AUTHOR = {Kaneko, Makoto and Sunouchi, Gen-ichir\^{o}},
     TITLE = {On the {L}ittlewood--{P}aley and {M}arcinkiewicz functions in
              higher dimensions},
   JOURNAL = {Tohoku Math. J. (2)},
  FJOURNAL = {The Tohoku Mathematical Journal. Second Series},
    VOLUME = {37},
      YEAR = {1985},
    NUMBER = {3},
     PAGES = {343--365},
      ISSN = {0040-8735},
   MRCLASS = {42B25},
  MRNUMBER = {799527},
MRREVIEWER = {G. V. Welland},
       DOI = {10.2748/tmj/1178228647},
       URL = {https://doi-org.ezproxy.library.wisc.edu/10.2748/tmj/1178228647},
}

@article {Carbery1983,
    AUTHOR = {Carbery, Anthony},
     TITLE = {The boundedness of the maximal {B}ochner--{R}iesz operator on
              {$L^{4}({\mathbb R}^{2})$}},
   JOURNAL = {Duke Math. J.},
  FJOURNAL = {Duke Mathematical Journal},
    VOLUME = {50},
      YEAR = {1983},
    NUMBER = {2},
     PAGES = {409--416},
      ISSN = {0012-7094},
   MRCLASS = {42B15 (42B25)},
  MRNUMBER = {705033},
MRREVIEWER = {Jos\'{e} L. Rubio de Francia},
       URL = {http://projecteuclid.org.ezproxy.library.wisc.edu/euclid.dmj/1077303202},
}

@article {Seeger-crelle1986,
    AUTHOR = {Seeger, Andreas},
     TITLE = {On quasiradial {F}ourier multipliers and their maximal
              functions},
   JOURNAL = {J. Reine Angew. Math.},
  FJOURNAL = {Journal f\"{u}r die Reine und Angewandte Mathematik. [Crelle's
              Journal]},
    VOLUME = {370},
      YEAR = {1986},
     PAGES = {61--73},
      ISSN = {0075-4102},
   MRCLASS = {42B25 (42B15)},
  MRNUMBER = {852510},
MRREVIEWER = {Satoru Igari},
       DOI = {10.1515/crll.1986.370.61},
       URL = {https://doi-org.ezproxy.library.wisc.edu/10.1515/crll.1986.370.61},
}

@article {Cordoba1979,
    AUTHOR = {C\'{o}rdoba, Antonio},
     TITLE = {A note on {B}ochner--{R}iesz operators},
   JOURNAL = {Duke Math. J.},
  FJOURNAL = {Duke Mathematical Journal},
    VOLUME = {46},
      YEAR = {1979},
    NUMBER = {3},
     PAGES = {505--511},
      ISSN = {0012-7094},
   MRCLASS = {42B15 (42B25)},
  MRNUMBER = {544242},
MRREVIEWER = {Richard Bagby},
       URL = {http://projecteuclid.org.ezproxy.library.wisc.edu/euclid.dmj/1077313571},
}

@article {Keich,
    AUTHOR = {Keich, Uri},
     TITLE = {On {$L^p$} bounds for {K}akeya maximal functions and the
              {M}inkowski dimension in {${\mathbb R}^2$}},
   JOURNAL = {Bull. London Math. Soc.},
  FJOURNAL = {The Bulletin of the London Mathematical Society},
    VOLUME = {31},
      YEAR = {1999},
    NUMBER = {2},
     PAGES = {213--221},
      ISSN = {0024-6093},
   MRCLASS = {42B25 (28A78)},
  MRNUMBER = {1664129},
MRREVIEWER = {Andreas Seeger},
       DOI = {10.1112/S0024609398005372},
       URL = {https://doi-org.ezproxy.library.wisc.edu/10.1112/S0024609398005372},
}

@book {Triebel1983,
    AUTHOR = {Triebel, Hans},
     TITLE = {Theory of function spaces},
    SERIES = {Monographs in Mathematics},
    VOLUME = {78},
 PUBLISHER = {Birkh\"{a}user Verlag, Basel},
      YEAR = {1983},
     PAGES = {284},
      ISBN = {3-7643-1381-1},
   MRCLASS = {46Exx},
  MRNUMBER = {781540},
       DOI = {10.1007/978-3-0346-0416-1},
       URL = {https://doi-org.ezproxy.library.wisc.edu/10.1007/978-3-0346-0416-1},
}

@article {BeltranOberlinRoncalSeegerStovall,
    AUTHOR = {Beltran, David and Oberlin, Richard and Roncal, Luz and
              Seeger, Andreas and Stovall, Betsy},
     TITLE = {Variation bounds for spherical averages},
   JOURNAL = {Math. Ann.},
  FJOURNAL = {Mathematische Annalen},
    VOLUME = {382},
      YEAR = {2022},
    NUMBER = {1-2},
     PAGES = {459--512},
      ISSN = {0025-5831,1432-1807},
   MRCLASS = {42B15 (42B25)},
  MRNUMBER = {4377310},
MRREVIEWER = {Sundaram\ Thangavelu},
       DOI = {10.1007/s00208-021-02218-2},
       URL = {https://doi-org.ezproxy.library.wisc.edu/10.1007/s00208-021-02218-2},
}

@article {BallestaGarrigos,
    AUTHOR = {Ballesta Yag\"ue, Fernando and Garrig\'os, Gustavo},
     TITLE = {Local cone multipliers and {C}auchy-{S}zeg\"o{} projections in
              bounded symmetric domains},
   JOURNAL = {J. Lond. Math. Soc. (2)},
  FJOURNAL = {Journal of the London Mathematical Society. Second Series},
    VOLUME = {110},
      YEAR = {2024},
    NUMBER = {4},
     PAGES = {Paper No. e12986, 20},
      ISSN = {0024-6107,1469-7750},
   MRCLASS = {32A25 (15B48 32A35 32M15 42B15 47B32)},
  MRNUMBER = {4801889},
MRREVIEWER = {Zhiming\ Feng},
       DOI = {10.1112/jlms.12986},
       URL = {https://doi-org.ezproxy.library.wisc.edu/10.1112/jlms.12986},
}

@article {Craig25,
    AUTHOR = {Craig, Sam},
     TITLE = {Failure of weak-type endpoint restriction estimates for
              quadratic manifolds.},
   JOURNAL = {J. Geom. Anal.},
  FJOURNAL = {Journal of Geometric Analysis},
    VOLUME = {35},
      YEAR = {2025},
    NUMBER = {12},
     PAGES = {Paper No. 382, 14},
      ISSN = {1050-6926, 1559-002X},
   MRCLASS = {42B25},
  MRNUMBER = {4970321},
       DOI = {10.1007/s12220-025-02209-8},
       URL = {https://doi-org.ezproxy.library.wisc.edu/10.1007/s12220-025-02209-8},
}

@article {SteinActa58,
    AUTHOR = {Stein, Elias M.},
     TITLE = {Localization and summability of multiple {F}ourier series},
   JOURNAL = {Acta Math.},
  FJOURNAL = {Acta Mathematica},
    VOLUME = {100},
      YEAR = {1958},
     PAGES = {93--147},
      ISSN = {0001-5962,1871-2509},
   MRCLASS = {42.00},
  MRNUMBER = {105592},
MRREVIEWER = {K.\ Chandrasekharan},
       DOI = {10.1007/BF02559603},
       URL = {https://doi-org.ezproxy.library.wisc.edu/10.1007/BF02559603},
}

@article {SteinWainger-Bull78,
    AUTHOR = {Stein, Elias M. and Wainger, Stephen},
     TITLE = {Problems in harmonic analysis related to curvature},
   JOURNAL = {Bull. Amer. Math. Soc.},
  FJOURNAL = {Bulletin of the American Mathematical Society},
    VOLUME = {84},
      YEAR = {1978},
    NUMBER = {6},
     PAGES = {1239--1295},
      ISSN = {0002-9904},
   MRCLASS = {42B20 (28A15)},
  MRNUMBER = {508453},
MRREVIEWER = {Alberto\ Torchinsky},
       DOI = {10.1090/S0002-9904-1978-14554-6},
       URL = {https://doi-org.ezproxy.library.wisc.edu/10.1090/S0002-9904-1978-14554-6},
}

@article {KenigTomas-TAMS1980,
    AUTHOR = {Kenig, Carlos E. and Tomas, Peter A.},
     TITLE = {{$L\sp{p}$}\ behavior of certain second order partial
              differential operators},
   JOURNAL = {Trans. Amer. Math. Soc.},
  FJOURNAL = {Transactions of the American Mathematical Society},
    VOLUME = {262},
      YEAR = {1980},
    NUMBER = {2},
     PAGES = {521--531},
      ISSN = {0002-9947,1088-6850},
   MRCLASS = {42B15 (35E20 42A45)},
  MRNUMBER = {586732},
MRREVIEWER = {Jos\'e\ Garc\'ia-Cuerva},
       DOI = {10.2307/1999843},
       URL = {https://doi-org.ezproxy.library.wisc.edu/10.2307/1999843},
}

@article {Ruiz-PAMS1983-1,
    AUTHOR = {Ruiz, Alberto},
     TITLE = {{$L\sp{p}$}-boundedness of a certain class of multipliers
              associated with curves on the plane. {I}},
   JOURNAL = {Proc. Amer. Math. Soc.},
  FJOURNAL = {Proceedings of the American Mathematical Society},
    VOLUME = {87},
      YEAR = {1983},
    NUMBER = {2},
     PAGES = {271--276},
      ISSN = {0002-9939,1088-6826},
   MRCLASS = {42B15},
  MRNUMBER = {681833},
MRREVIEWER = {Kenneth\ F.\ Andersen},
       DOI = {10.2307/2043701},
       URL = {https://doi-org.ezproxy.library.wisc.edu/10.2307/2043701},
}
\end{document}